\newcommand{\Z}{\mathbb Z}
\newcommand{\Q}{\mathbb Q}
\newcommand{\R}{\mathbb R}
\newcommand{\gal}{\textrm{Gal}}
\newcommand{\rel}{\mathord{\sim}}
\colorlet{rxngreen}{green!80!black}
\newtheorem*{nota}{Notation}
\newlength\mylen
\newlist{pfcases}{enumerate}{1}
\setlist[pfcases,1]{label=\textbf{Case~\arabic*.}, 
  labelwidth=\dimexpr-\mylen-\labelsep\relax,leftmargin=0pt,align=right}
\title{Conditions for Solvability in Chemical Reaction Networks at Quasi-Steady-State}
\author{Mark Sweeney}
\date{\today}
\author{Mark A. Sweeney\thanks{Email: \email{msween14@rochester.edu}.}}
\begin{document}
\maketitle

\begin{abstract}
The quasi-steady-state assumption (QSSA) is an approximation that is widely used in chemistry and chemical engineering to simplify reaction mechanisms. The key step in the method requires a solution by radicals of a system of multivariate polynomials. However, Pantea, Gupta, Rawlings, and Craciun showed that there exist mechanisms for which the associated polynomials are not solvable by radicals, and hence reduction by QSSA is not possible. In practice, however, reduction by QSSA always succeeds. To provide some explanation for this phenomenon, we prove that solvability is guaranteed for a class of common chemical reaction networks. In the course of establishing this result, we examine the question of when it is possible to ensure that there are finitely many (quasi) steady states. We also apply our results to several examples, in particular demonstrating the minimality of the nonsolvable example presented by Pantea, Gupta, Rawlings, and Craciun.
\end{abstract}

\begin{keywords}
	chemical reaction networks, quasi-steady-state, mass-action kinetics, species-reaction graph, Galois group
\end{keywords}

\begin{AMS}
	00A69,%general applied mathematics
	92B05,%general biology and biomathematics
	12D10,%polynomials, location of zeros
	65H04,%roots of polynomial equations
	92E20 %chemistry > classical flow reactions
\end{AMS}

\section{Introduction}
Many important reactions in chemistry are described by large reaction mechanisms, collections of small intermediate reactions which combine to form the overall reaction. These reactions give rise to a system of differential equations describing the change of concentrations over time, allowing the behavior of the system to be predicted and analyzed. Due to their complexity, it is often useful to simplify the system of differential equations so that calculations can be performed more quickly. One of the most well-known methods for reducing mechanisms is the quasi-steady-state assumption (QSSA), which replaces some of the differential equations with polynomial equations (in several variables). After solving these equations, the concentrations of some species can be removed from the model by substitution.

However, reduction by QSSA is only possible when the system of polynomials obtained from applying the assumption admits a finite number of solutions, all expressible in radicals. This is often the case in practice, and it was noticed only recently that there are mechanisms for which this reduction is not possible. In 2014, Pantea, Gupta, Rawlings, and Craciun presented two mechanisms for which reduction by QSSA cannot be performed~\cite{pantea}. One mechanism was constructed, but the other is an actual mechanism arising from chemistry, and one which would typically lend itself to reduction by QSSA.

QSSA is still taught today, and remains one of the most fundamental and simple methods of model reduction (\cite{fogler}, as PSSH). Given the widespread use of the method, we are interested in knowing when we can guarantee that reduction by QSSA is possible. In this work, we present conditions on the kinetics of the system and the structure of a graph derived from the mechanism which guarantee that QSSA can be performed. Also, these conditions are sufficient to demonstrate the minimality of the examples presented by Pantea, Gupta, Rawlings, and Craciun with respect to kinetics, number of intermediates, number of species, and a particular structural property of the network. The class of CRNs and intermediates discussed in this paper are fairly common in chemistry, providing a partial explanation for why, historically, the issue of insolvability was not noticed.

The paper is organized as follows. In Section 2, notation and definitions are introduced, as well as a description of the QSSA method. In Section 3, we recall several basic facts from commutative algebra, and then apply them to show that, under some mild conditions, reduction with QSSA is possible for networks with few intermediates. Subsequently, we show that if reduction is ``locally'' possible, then it can be extended to the entire network if the network satisfies certain structural and algebraic properties. A series of illustrative examples are provided in Section 4. We conclude the paper with a discussion of our results in a historical context, and describe further problems suggested by this work.

%%%%%%%%%%%%%%%%%%%%%%%%%%%%%%%%%%%%%%%%%%%%%%%%%%%%%%%%%%%%%%%%%%%%%%%%%%%%%%%%%%%%%%%%%

\section{Background}
\subsection{Chemical Reaction Networks}
First we state several important definitions used for describing chemical reaction networks (CRNs), following the presentation in~\cite{feinberg}, and the construction of the oriented species-reaction graph~\cite{osr}.

\begin{definition}
	A \textbf{chemical reaction network (CRN)} is described by the following information:
		\begin{enumerate}[leftmargin=2cm]
		\item A finite set of species, denoted by $\mathcal S$.
		\item A finite set of complexes, denoted by $\mathcal C$. These are formal linear combinations of species with coefficients in $\Z_{\geq 0}$. They can be identified with tuples from $\Z_{\geq 0}^{|\mathcal S|}$ indexed by $\mathcal S$.
		\item A finite set of reactions, denoted by $\mathcal R$, which can formally be viewed as a subset of $\mathcal C \times \mathcal C$, where a reaction $c \rightarrow c'$ is identified with the pair $(c,c')$.
		\end{enumerate}
\end{definition}
		
	Treating the complexes as vectors, we use $c_s$ to denote the coefficient of species $s$ in complex $c$. In more general circumstances, complexes may have coefficients in $\R$, but our results will not apply to such networks, as they do not result in polynomials after applying the QSSA.
	
	Reversible reactions are split into separate forward and reverse reactions.
	
	When we would like to emphasize that the concentration of a species $s$ is a variable, we will write $x_s$ for its concentration. When the concentration is treated as a parameter or constant, we typically write $c_s$ instead.

\begin{definition}
	The complex on the left in a reaction $c\rightarrow c'$ is referred to as the \textbf{reactants}, and that on the right as the \textbf{products}.
\end{definition}

\begin{definition}
	The \textbf{rate law polynomial} for a species $s$ in a CRN $(\mathcal S,\mathcal C,\mathcal R)$ is the derivative of its concentration with respect to time, and hence dictates the evolution of its concentration over time. For mass-action kinetics, it is defined as
	$$\frac{dx_s}{dt} = \Phi_s(\mathbf x) = \sum_{c\rightarrow c' \in \mathcal R} (c'-c)_A k_{c\rightarrow c'} \mathbf x^c.$$
	
	The vector $\mathbf x$ encodes all the species concentrations, indexed by $\mathcal S$. The term $\mathbf x^c$ is the product of each nonzero concentration in $\mathbf x$ raised to its coefficient in $c$.

	The coefficient $k_{c\rightarrow c'}$ is the \textbf{rate constant} of the reaction $c\rightarrow c'$. Because $c \in \Z^{|\mathcal S|}_{\geq 0}$, the expression $\mathbf s ^c$ is a monomial in the concentration variables.
\end{definition}

The set of all the rate law polynomials associated to a CRN is a system of differential equations which describes the evolution of all the concentrations in time.

\begin{definition}
	A $\Q$-linear dependence among the rate law polynomials of a CRN is called a \textbf{linear conservation law} (LCL).
	
	This means that, for each $s \in \mathcal S$, there exists some rational number $a_s$ such that:
	$$0 = \sum_{s\in\mathcal S} a_s \Phi_s(\mathbf x) = \sum_{s \in \mathcal S} a_s\frac{dx_s}{dt} = \frac d {dt} \sum_{s\in \mathcal S} a_s x_s,$$
	which is equivalent to saying
	$$\sum_{s \in \mathcal S} a_s x_s = T,$$
where $T$ is a constant determined by the initial conditions as
	$$T = \sum_{s\in\mathcal S} a_s x_{s}(t=0).$$

	For a subset of species $\mathcal Q$, we define $LCL(\mathcal R, \mathcal Q)$ as a set of conservation equations corresponding to a maximal set of independent LCLs arising from the rate law polynomials for each species $q \in \mathcal Q$.
\end{definition}

	Often, linear conservation laws describe constant total amounts of some quantity in the system. For instance, in many CRNs arising from biology, there is typically an LCL which corresponds to the total amount of each enzyme is fixed. For instance, Figure \ref{fig:intro} is a common model for enzyme-substrate reactions. Examining the differential equations, we can find one rate law:
	$$\frac{dx_E}{dt} + \frac{x_{E\cdot S}}{dt} = (-k_1x_Ex_S + k_{-1}x_{E\cdot S} + k_2x_{E\cdot S}) + (k_1x_Ex_S - k_{-1}x_{E\cdot S} - k_2x_{E\cdot S}) = 0$$
	
	Physically we recognize that this sum is the total amount of enzyme in the system, so the LCL would be of the form $x_E + x_{E\cdot S} = T$, where $T$ is the amount of enzyme.

\begin{definition}
	A CRN is \textbf{at-most-bimolecular} if the reactants are always of the form $2A$ or $A + B$. This implies that each rate law polynomials is at most degree $2$ in the concentrations.
\end{definition}

Such networks are common in chemistry: for an ``elementary'' mechanism, each reaction corresponds to the collision and subsequent reaction of the reactants. The probability of a 3-way collision occurring is so small that such reactions are not common in practice, and proceed very slowly when they do occur~\cite{fogler}.

Next, we define an associated graph, whose structure is closely related to the structure of the differential equations. The definition here largely coincides with that in~\cite{osr}, but here reversible reactions are treated as two separate reactions.

\begin{definition}
	The \textbf{oriented species-reaction graph (OSR graph)} is defined as follows. 

	The vertices of the graph are given by the (disjoint) union of $\mathcal S$ and $\mathcal R$. For each reaction $c\rightarrow c'$, an edge is drawn from $s$ to $c\rightarrow c'$ if $c_s$ is nonzero, and an edge from $c\rightarrow c'$ to $s$ if $c'_s$ is nonzero. Edges are weighted by the value of $c_s$ or $c'_s$, respectively.
\end{definition}

	When drawn, species are generally enclosed in circles, and reactions in rectangles. A small example is depicted in Figure \ref{fig:intro}.

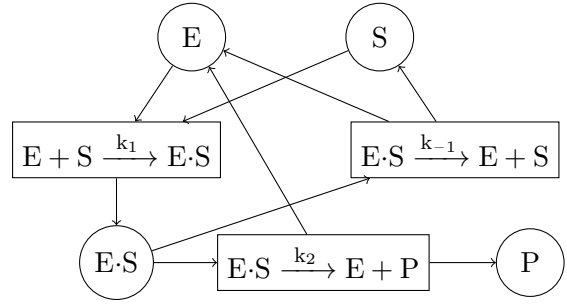
\begin{figure}
	\begin{center}
		\ce{E + S <=>[k_1][k_{-1}] E\cdot S ->[k_2] E + P}
	
		\vspace{0.5cm}

		\begin{minipage}{0.5\textwidth}
		\begin{align*}
			\frac{dx_E}{dt} &= -k_1x_Ex_S + k_{-1}x_{E\cdot S} + k_2 x_{E\cdot S}\\
			\frac{dx_S}{dt} &= -k_1x_Ex_S + k_{-1}x_{E\cdot S}\\
			\frac{dx_{E\cdot S}}{dt} &= k_1x_Ex_S - k_{-1}x_{E\cdot S} - k_2 x_{E\cdot S}\\
			\frac{dx_P}{dt} &= k_2 x_{E\cdot S}
		\end{align*}
		
		\end{minipage}
		\begin{minipage}{0.45\textwidth}
			\begin{tikzpicture}
				\node[circle,draw,minimum size=.9cm] at (-1,3) (E) {E};
				\node[circle,draw,minimum size=.9cm] at (3.5,0) (P) {P};
				\node[circle,draw,minimum size=.9cm] at (1.5,3) (S) {S};
				\node[circle,draw,minimum size=.9cm] at (-2,0) (ES) {E$\cdot$S};
	
				\node[rectangle,draw] at (-2,1.5) (r1) {\ce{E + S ->[k_1] E$\cdot$S}};
				\node[rectangle,draw] at (2.5,1.5) (r-1) {\ce{E$\cdot$S ->[k_{-1}] E + S}};
				\node[rectangle,draw] at (0.75,0) (r2) {\ce{E$\cdot$S ->[k_2] E + P}};
	
				\draw[->] (E) -- (r1);
				\draw[->] (S) -- (r1);
				\draw[->] (ES) -- (r-1);
				\draw[->] (ES) -- (r2);
	
				\draw[->] (r1) -- (ES);
				\draw[->] (r-1) -- (E);
				\draw[->] (r-1) -- (S);
				\draw[->] (r2) -- (P);
				\draw[->] (r2) -- (E);
			\end{tikzpicture}
		\end{minipage}
	\end{center}
\caption{\label{fig:intro}Example of a chemical reaction network, the corresponding differential equations, and the OSR graph.}
\end{figure}

\subsection{Quasi-Steady-State Assumption}
As discussed in the introduction, the quasi-steady-state assumption (QSSA) is a way of reducing a system of differential equations. The method is as follows:
\begin{enumerate}[leftmargin=2cm]
	\item Identify a subset of species $\mathcal Q \subseteq S$ as intermediates.
	\item Set the corresponding rate law polynomials, $\{\Phi_q\ |\ q \in\mathcal Q\}$, equal to zero (this is the application of the QSSA).
	\item From these polynomial equations, $\{\Phi_q = 0\ |\ q\in\mathcal Q\}$, solve for the concentrations $x_q$, where $q\in\mathcal Q$, in terms of the rate constants and non-intermediate concentrations.
	\item Substitute these solutions into the remaining differential equations, eliminating the concentrations of the species in $\mathcal Q$ from the model.
\end{enumerate}

For example, consider the mechanism depicted in Figure \ref{fig:intro}. The enzyme-substrate complex $E\cdot S$ may be an unstable structure, and hence a good candidate for elimination by QSSA. Apply the QSSA by setting the corresponding rate law polynomial equal to zero:
$$\frac{d[E\cdot S]}{dt} = k_1[E][S] - k_{-1}[E\cdot S] - k_2 [E\cdot S] = 0$$
$$\Rightarrow [E\cdot S] = \frac{k_1}{k_{-1} + k_2}[E][S]$$

This expression for $[E\cdot S]$ can be substituted into the remaining differential equations, eliminating the intermediate's concentration from them.

When reduction by QSSA is possible, this is a powerful technique for simplifying the model, and one that is particularly useful in applications. For instance, most reasonable intermediates exist at very small concentrations, and so eliminating them removes variables which are difficult to measure experimentally.

Reduction by QSSA is most useful when all of the solutions from the third step can be expressible by radicals. This is typically the case, especially for systems which are analyzed manually. However, Pantea, Gupta, Rawlings, and Craciun found two mechanisms for which they proved that there is no solutions expressible in radicals~\cite{pantea}. This naturally leads us to ask for which CRNs we can guarantee that reduction by QSSA is or is not possible.

\subsubsection{Definitions}
Before presenting the main results, we need a few further definitions.

Let $(\mathcal S,\mathcal C,\mathcal R)$ be a CRN and $\mathcal Q \subseteq \mathcal S$ a subset of species identified as intermediates to eliminate by QSSA.

\begin{definition}
	For QSSA, solvability is determined with respect to a particular ground field, $\Bbbk_{\mathcal Q}$. This field is obtained by adjoining to $\Q$ the nonintermediate concentrations, rate constants, and constant terms resulting from linear concentration laws:
	$$\Bbbk_{\mathcal Q} = \Q(\{c_s\ |\ s \in \mathcal S - \mathcal Q\}\cup\{k_{c\rightarrow c'}\ |\ c\rightarrow c' \in \mathcal R\}\cup \{T\ |\ T\textrm{ a constant term of }f\in LCL(\mathcal R,\mathcal Q)\})$$
\end{definition}

	The $c$'s, $k$'s, and $T$'s correspond to the concentrations of the nonintermediate species, rate constants, and conservation law parameters, respectively. To simplify the notation, these parameters are often indexed by integers. All of these parameters are algebraically independent.
	
	The subscript on $\Bbbk_{\mathcal Q}$ may be omitted when it is clear from context.

\begin{definition}
	Computations will take place in one of two polynomial rings: $A_{\mathcal Q} = \Bbbk[x_q\ |\ q \in \mathcal Q]$ or $B_{\mathcal Q} = \Bbbk^a[x_q\ |\ q\in\mathcal Q]$, where $\Bbbk^a$ denotes the algebraic closure. 
\end{definition}

To perform QSSA, we are interested in solving a system of polynomial equations from $A_{\mathcal Q}$. Zeros of systems of equations correspond to ideals, motivating the following:

\begin{definition}
The \textbf{quasi-steady-state ideal}, denoted by $I_{\mathcal Q}$, is the ideal generated by the rate law polynomials for each intermediate, along with any conservation laws:
	$$I_{\mathcal Q} = \langle \{\Phi_q\ |\ q \in \mathcal Q\} \cup LCL(\mathcal R,\mathcal Q)\rangle$$
\end{definition}
	
	We can view $I_{\mathcal Q}$ as an ideal of either $A_{\mathcal Q}$ or $B_{\mathcal Q}$. Where this is not clear from the context, it will be specified.

\begin{nota}
	For an ideal $I$ in $k[x_1,...,x_n]$, we use $V(I)$ to denote the set of points in $k^n$ on which every polynomial in $I$ vanishes, and $V^a(I)$ when we take the points from $(k^a)^n$.
	
	For the quasi-steady-state ideal $I_{\mathcal Q}$, these zeros may be called either solutions or quasi-steady-states.
\end{nota}

\begin{definition}
	We say that a solution $\alpha \in V^a(I_{\mathcal Q})$ is a \textbf{boundary solution} if at least one of its coordinates is zero.
\end{definition}

	The presence of a boundary quasi-steady-state may be an indicator that a poor choice of intermediates was made, as it means that this particular quasi-steady-state requires some intermediate to be absent from the system. However, it could be the case that even a reasonable choice of intermediates leads to some spurious boundary solutions.

\begin{remark}
	Under our definition of QSSA, the LCLs are included as constraints in the system. In the definition of $I_{\mathcal Q}$, we restricted to LCLs involving only the intermediates: this is because the QSSA implies the intermediates react on a faster time scale than the slow species. Essentially, the derivative with respect to $t$ (time) becomes a derivative with respect to some $t'$, which means we can no longer pull it out of the sum in an LCL.
	
There are both advantages and disadvantages to treating systems which require LCLs to make QSSA possible. On one hand, allowing LCLs lets us treat a wider range of CRNs. On the other, the constants associated to the LCLs can be difficult or impossible to measure in practice. Further, species for which there exist LCLs are more likely to be poor candidates for elimination by QSSA.

As such, it may be preferable to say that QSSA reduction is impossible for any collection of intermediates for which there are LCLs. The only results in this work which depend on the inclusion of the LCLs are those about the number of (quasi) steady states, as it may be necessary to include them to guarantee that there are finitely many steady states. Naturally, all of the results presented here remain true if we restrict QSSA to a choice of intermediates for which there are no LCLs.
\end{remark}

The following example illustrates these definitions:

\begin{example}
	Consider the following CRN:
\begin{center}
	\ce{A ->[k_1] 2X  <=>[k_2][k_{-2}] 2Y}
	
	\ce{X + Y ->[k_3] B}
\end{center}

Let $\mathcal Q =\{X,Y\}$ be the intermediates. For simplicity, let $a=[A]$, $b=[B]$, $x=[X]$, and $y=[Y]$. There are no LCLs, so the ground field is just
$$\Bbbk_{\mathcal Q} = \Q(a,b,k_1,k_2,k_{-2},k_3).$$

This means $A_{\mathcal Q} = \Bbbk_{\mathcal Q} [x,y]$. The ideal $I_{\mathcal Q}$ is generated by the following two polynomials:
\begin{align*}
	\Phi_X(x,y) &= -2k_2x^2 - k_3xy + 2k_{-2}y^2 + k_1a\\
\Phi_Y(x,y) &= -2k_{-2}y^2 - k_3xy + 2k_2x^2.
\end{align*}

A Gr\"obner basis for $I_{\mathcal Q}$ over $\Bbbk_{\mathcal Q}$ contains the univariate polynomial
\begin{align*}
	f(x) =& (8k_{-2}k_2^2-3k_2k_3^2)x^4+(8k_{-2}k_2k_3)x^3\\
	      &+(-8ak_{-2}k_1k_2+ak_1k_3^2-4k_{-2}^2k_2)x^2\\
	      &-(2k_{-2}k_1ak_3)x+(2a^2k_{-2}k_1^2).
\end{align*}

Thus, the $x$-coordinate of any quasi-steady-state must be a root of this polynomial. The Galois group of this polynomial over $\Bbbk$ is isomorphic to $D_8$, which is solvable, and so the $x$ component of any solution can be expressed in radicals. The Galois group in the $y$-coordinate is also isomorphic to $D_8$, so reduction by QSSA is possible for this CRN and choice of intermediates.

It is somewhat surprising that the Galois group of $f$ is not just isomorphic to $S_4$, as $f$ does not seem to be particularly structured. Indeed, the relationship between the mechanism and the Galois groups is not well-understood, but the simplicity of the Galois group may be a reflection of the simplicity of the original mechanism.
\end{example}

We are interested in characterizing CRNs and sets of intermediates for which reduction by QSSA is possible. By ``possible'' we mean that:
\begin{enumerate}[leftmargin=2cm]
	\item There are finitely many solutions to the corresponding system of polynomials, that is, $V^a(I_{\mathcal Q})$ is finite, and
	\item These solutions can be obtained by radicals: for each point $(\alpha_1,...,\alpha_m) \in V^a(I_{\mathcal Q})$, the extension $\Bbbk(\alpha_i)$ is solvable.
\end{enumerate}

This is not the only possible definition. As discussed previously, it might be reasonable to require that there are no LCLs. We could also weaken the definition to only require finiteness and solvability for real solutions. This can be further weakened to nonzero, nonnegative, or positive concentrations, which are, after all, the physically meaningful concentrations.

%%%%%%%%%%%%%%%%%%%%%%%%%%%%%%%%%%%%%%%%%%%%%%%%%%%%%%%%%%%%%%%%%%%%%%%%%%%%%%%%%%%%%%%%%

\section{Main Results}
This section presents the main results on CRN rate law polynomials. First, we discuss some useful properties of the coefficients of the rate law polynomials $(\Phi_s)$, some of which address the role LCLs play in reduction by QSSA. Next, we present simple solvability criteria for the rate law polynomials of CRNs, as well as a useful finiteness condition. This section concludes with a graph-theoretical construction which allows the previous results to be extended to larger networks.

\subsection{Properties of Rate Law Polynomials}
As we shall see later, the finiteness of $V^a(I_{\mathcal Q})$ is crucial. Not only is it necessary to perform QSSA, it is also the first thing which must be established before proving solvability results. For most choices of intermediates, the ideal $I_{\mathcal Q}$ has finitely many zeros. Intuitively, we expect this to be the case, as the algebraically independent rate constants appearing in the coefficients make the rate law polynomials behave almost as if they have generic coefficients. Furthermore, one common way in which rate law polynomials exhibit degenerate behavior implies the existence of an LCL, which corrects for the degeneracy.

As a matter of notation, when we write that a value depends on some collection of rate constants $\{k_1,...,k_n\}$ we mean that it is algebraic over $\Q(k_1,...,k_n)$.

\begin{proposition}
	The coefficients $c_\alpha$ and $c_\beta$ of distinct monomials $x^\alpha$ and $x^\beta$ of rate law polynomials depend on disjoint sets of rate constants.
\end{proposition}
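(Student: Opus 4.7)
The plan is to unpack the definition of the rate law polynomial and observe that a given monomial $\mathbf{x}^\alpha$ receives contributions only from reactions whose reactant complex is exactly $\alpha$. Starting from
$$\Phi_s(\mathbf{x}) = \sum_{c\to c' \in \mathcal R} (c'-c)_s \, k_{c\to c'} \, \mathbf{x}^c$$
and grouping terms by their reactant complex, the coefficient of $\mathbf{x}^\alpha$ in $\Phi_s$ takes the form
$$c_\alpha^{(s)} = \sum_{\alpha\to c' \in \mathcal R} (c'-\alpha)_s \, k_{\alpha\to c'},$$
where the sum ranges only over reactions whose reactant complex is $\alpha$. The stoichiometric factors $(c'-\alpha)_s$ lie in $\Z$ (in particular, in $\Q$), so the entire coefficient is a $\Q$-linear combination of rate constants indexed by reactions $\alpha \to c'$.

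The second step is the bookkeeping. Since each rate constant $k_{c\to c'}$ is uniquely attached to its reaction, the indexing set $\{k_{\alpha\to c'} : \alpha \to c' \in \mathcal R\}$ is entirely determined by the reactant complex $\alpha$, and in particular disjoint from the corresponding set for any other complex $\beta \neq \alpha$. Thus $c_\alpha^{(s)}$ and $c_\beta^{(s')}$ are polynomial expressions (in fact linear) in completely disjoint collections of the rate constants. Recalling the convention introduced just before the proposition—that "depends on" means algebraic over $\Q$ adjoined those constants—this immediately gives the claim, whether $\mathbf{x}^\alpha$ and $\mathbf{x}^\beta$ appear in the same $\Phi_s$ or in two different rate law polynomials.

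There is essentially no obstacle here beyond setting up the right accounting. The only subtlety worth flagging is that a single reactant complex $\alpha$ may be the source of several reactions $\alpha \to c'_1, \alpha \to c'_2, \dots$, so the coefficient $c_\alpha^{(s)}$ can legitimately involve more than one rate constant; the point is simply that all such rate constants are still bound to the complex $\alpha$ and therefore cannot appear in the coefficient of any other monomial $\mathbf{x}^\beta$.
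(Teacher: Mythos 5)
Your argument is correct and is exactly the intended one: the paper in fact omits the proof entirely, dismissing the proposition as ``well-known and simple,'' and your grouping of terms by reactant complex together with the observation that each rate constant is bound to a unique reaction supplies precisely the missing bookkeeping. The only point worth flagging is that the paper later applies this fact to the polynomials viewed over $\Bbbk_{\mathcal Q}$, i.e., as polynomials in the intermediate concentrations only, where a single monomial can collect contributions from several reactant complexes differing in their nonintermediate parts; your argument adapts verbatim, since the sets of reactions contributing to distinct intermediate-monomials are still disjoint.
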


This is a well-known and simple result about rate law polynomials, but it will prove to be crucial in what follows. It suggests that the coefficients of rate law polynomials are almost algebraically independent, and so it is particularly useful for showing that identities relating the coefficients of the rate law polynomials cannot exist.

For instance, consider two rate law polynomials arising from the same network:
\begin{align*}
	\Phi_x(x,y) &= ax  + by  + c\\
	\Phi_y(x,y) &= a'x + b'y + c'.
\end{align*}

Assume that $\Phi_x$ is nonconstant, and that it is possible to form some identity from these coefficients, such as
$$b + bc' - a^2 = 0$$

Because $c'$ is algebraically independent of $a$ and $b$, it must be the case that $b=0$ or $c' = 0$. In the second case, we obtain $b-a^2 = 0$, which requires $b=0$ as well. However, if $b=0$, then it must be that $a=0$ as well, since the identity becomes $a^2 = 0$. This contradicts the claim that $\Phi_x$ is nonconstant.

We can exploit a similar argument to show that rate law polynomials are well-behaved with respect to divisibility. We will show that if one rate law polynomial is a multiple of another over $\Bbbk$, then they are in fact multiples over $\Q$, assuming they are not both monomials. When analyzing QSSA, this provides a workaround for some degeneracies.

\begin{lemma}
	Suppose that $\alpha$ is algebraic over both $\Q(x_1,...,x_n)$ and $\Q(y_1,...,y_m)$, where the $x_i$s and $y_j$s are algebraically independent over $\Q$. Then $\alpha$ is in fact algebraic over $\Q$.
	\label{inddia}
\end{lemma}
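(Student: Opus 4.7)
The plan is to argue by contradiction using transcendence degree, so suppose $\alpha$ is transcendental over $\Q$. The strategy is to compute $\mathrm{trdeg}(\Q(\alpha, x_1, \ldots, x_n, y_1, \ldots, y_m)/\Q(\alpha))$ in two different ways and extract an inequality of integers that cannot simultaneously hold.

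For the first computation, I would use the hypothesis that the combined list $x_1, \ldots, x_n, y_1, \ldots, y_m$ is algebraically independent over $\Q$, so $\Q(x_1, \ldots, x_n, y_1, \ldots, y_m)$ has transcendence degree $n+m$ over $\Q$; adjoining $\alpha$, which is algebraic over $\Q(x_1, \ldots, x_n)$, does not enlarge this transcendence degree. Combining this with $\mathrm{trdeg}(\Q(\alpha)/\Q) = 1$ (the contradiction hypothesis) and additivity of transcendence degree in the tower $\Q \subseteq \Q(\alpha) \subseteq \Q(\alpha, x_1, \ldots, x_n, y_1, \ldots, y_m)$ yields $\mathrm{trdeg}(\Q(\alpha, x_1, \ldots, x_n, y_1, \ldots, y_m)/\Q(\alpha)) = n+m-1$.

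For the second computation, $\alpha$ being algebraic over $\Q(x_1, \ldots, x_n)$ gives $\mathrm{trdeg}(\Q(\alpha, x_1, \ldots, x_n)/\Q) = n$, so the tower law together with $\mathrm{trdeg}(\Q(\alpha)/\Q)=1$ produces $\mathrm{trdeg}(\Q(\alpha, x_1, \ldots, x_n)/\Q(\alpha)) = n-1$. The symmetric argument gives $\mathrm{trdeg}(\Q(\alpha, y_1, \ldots, y_m)/\Q(\alpha)) = m-1$. Subadditivity of transcendence degree for the compositum $\Q(\alpha, x_1, \ldots, x_n)\cdot\Q(\alpha, y_1, \ldots, y_m)$ over $\Q(\alpha)$ then bounds the transcendence degree of $\Q(\alpha, x_1, \ldots, x_n, y_1, \ldots, y_m)/\Q(\alpha)$ by $(n-1)+(m-1) = n+m-2$.

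Since $n+m-1 > n+m-2$, the two conclusions are incompatible, so $\alpha$ cannot be transcendental over $\Q$. The only nontrivial input is subadditivity of transcendence degree over a compositum, which is standard; everything else is bookkeeping in short towers, so I do not anticipate a real obstacle. I would also flag in passing that the statement should be read with the combined list $x_1, \ldots, x_n, y_1, \ldots, y_m$ algebraically independent over $\Q$, since otherwise the conclusion fails (take $x_1 = y_1$ transcendental).
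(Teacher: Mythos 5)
Your argument is correct, and it takes a genuinely different route from the paper's. You count transcendence degrees: assuming $\alpha$ transcendental over $\Q$, the tower law applied to $\Q \subseteq \Q(\alpha) \subseteq \Q(\alpha,x_1,\dots,x_n,y_1,\dots,y_m)$ gives $\mathrm{trdeg} = n+m-1$ over $\Q(\alpha)$, while subadditivity over the compositum of $\Q(\alpha,x_1,\dots,x_n)$ and $\Q(\alpha,y_1,\dots,y_m)$ bounds the same quantity by $(n-1)+(m-1)$, a contradiction; the degenerate case $n=0$ or $m=0$ is vacuous since then $\alpha$ is algebraic over $\Q$ by hypothesis. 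The paper instead argues with conjugates: it forms the normal closure $L$ of $\Q(x_1,\dots,x_n)(\alpha)$, uses algebraic independence to get $[L(y_1,\dots,y_m):\Q(x_1,\dots,x_n,y_1,\dots,y_m)]=[L:\Q(x_1,\dots,x_n)]$, concludes that $\alpha$ has the same conjugates over $\Q(x_1,\dots,x_n)$ as over $\Q(y_1,\dots,y_m)$, hence the same minimal polynomial, whose coefficients must lie in the intersection of the two function fields, namely $\Q$. Your version is arguably cleaner: it avoids the somewhat delicate claims about degrees being preserved under base change and about the intersection of the two rational function fields, relying only on the standard additivity of transcendence degree in towers and subadditivity over composita. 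The paper's argument yields the marginally stronger fact that the minimal polynomial of $\alpha$ over $\Q(x_1,\dots,x_n)$ already has rational coefficients, but this extra information is not used in Proposition \ref{coeffprop}, where only algebraicity over $\Q$ (plus the observation that $\Q(k_1,\dots,k_m)$ contains no proper algebraic extension of $\Q$) is needed. Your closing caveat that the hypothesis must be read as joint algebraic independence of all $n+m$ elements is indeed the intended reading, and both proofs require it.
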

\begin{proof}
	Let $k=\Q(x_1,...,x_n)$, and $L$ the normal closure of $k(\alpha)$. We can see that $[L(y_1,...,y_m):k(y_1,...,y_m)] = [L:k]$ because of the algebraic independence of the $x$ and $y$ terms. This means that every conjugate of $\alpha$ over $k$ is also a conjugate of $\alpha$ over $L(y_1,...,y_m)$. But this analysis remains true if we start with $k = \Q(y_1,...,y_m)$ instead. As a result, the conjugates of $\alpha$ are the same over both $\Q(x_1,...,x_n)$ and $\Q(y_1,...,y_m)$. This, however, means that $\alpha$ has the same minimal polynomial over both fields, and hence that the minimal polynomial has coefficients from their intersection, which is $\Q$.
\end{proof}

\begin{proposition}
	\label{indprop}
	Suppose that $f$ and $g$ are polynomials over $\Bbbk = \Q(k_1,...,k_m)$ which satisfy the following:
	\begin{enumerate}[leftmargin=2cm]
		\item $f=\alpha g$ for some $\alpha \in \Bbbk$,
		\item coefficients of distinct monomials taken from either $f$ or $g$ depend on distinct subsets of $\{k_1,...,k_m\}$,
		\item $f$ and $g$ each have at least two nonzero terms.
	\end{enumerate}
	
	Then $\alpha$ in $\Q$.
	\label{coeffprop}
\end{proposition}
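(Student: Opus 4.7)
The plan is to apply Lemma \ref{inddia}, exploiting the fact that $\alpha$ can be written as a ratio of coefficients in two essentially independent ways. Using hypothesis (3), I would pick two distinct monomials $x^{\beta_1}$ and $x^{\beta_2}$ appearing with nonzero coefficient in $g$; call those coefficients $g_1$ and $g_2$. Since $f = \alpha g$, the corresponding coefficients in $f$ are $f_1 = \alpha g_1$ and $f_2 = \alpha g_2$. Note that $\alpha \neq 0$, since otherwise $f = 0$, contradicting (3); hence $f_1, f_2$ are also nonzero. From these relations we have $\alpha = f_1/g_1 = f_2/g_2$.

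Next, let $K_1 \subseteq \{k_1,\ldots,k_m\}$ be the union of the rate constants on which $f_1$ and $g_1$ depend, and $K_2$ the corresponding union for $f_2$ and $g_2$. The first expression for $\alpha$ shows $\alpha \in \Q(K_1)$, and the second shows $\alpha \in \Q(K_2)$. By hypothesis (2), applied to the distinct monomials $x^{\beta_1}$ and $x^{\beta_2}$, the sets $K_1$ and $K_2$ are disjoint. Since the $k_i$ are algebraically independent over $\Q$, Lemma \ref{inddia} applies with the $x_i$'s chosen from $K_1$ and the $y_j$'s from $K_2$, and we conclude that $\alpha$ is algebraic over $\Q$.

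Finally, $\alpha$ lies in the purely transcendental extension $\Bbbk = \Q(k_1,\ldots,k_m)$ of $\Q$, inside which $\Q$ is algebraically closed: any element of $\Bbbk$ algebraic over $\Q$ must already be a rational number. Therefore $\alpha \in \Q$, as desired.

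The only real subtlety is the careful reading of hypothesis (2): I need that for two distinct monomials $x^{\beta_1} \neq x^{\beta_2}$, the entire bundle of coefficients $\{f_1, g_1\}$ depends on rate constants disjoint from those involved in $\{f_2, g_2\}$, so that $K_1 \cap K_2 = \emptyset$ genuinely holds. This is consistent with the preceding (unnumbered) proposition on rate law polynomials, where the disjointness came from the fact that a monomial $x^c$ uniquely determines the reactions $c \to c'$ whose rate constants can appear in its coefficient. Once that interpretation is granted, the rest is a clean application of Lemma \ref{inddia} plus the standard fact that $\Q$ is algebraically closed in $\Q(k_1,\ldots,k_m)$.
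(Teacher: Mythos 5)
Your proof is correct and follows essentially the same route as the paper's: pick two monomials in the common support, express $\alpha$ as a ratio of coefficients in two ways, use hypothesis (2) to get disjoint sets of rate constants, apply Lemma \ref{inddia}, and finish by noting $\Q$ is algebraically closed in $\Q(k_1,\ldots,k_m)$. Your explicit observations that $\alpha\neq 0$ (so $f$ and $g$ have matching supports) and that ``depends on'' should be read as ``algebraic over'' are points the paper leaves implicit, but the argument is the same.
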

\begin{proof}
	The second condition implies that $a,a'$ are algebraic over $\Q(S)$ and $b,b'$ over $\Q(R)$ for some disjoint subsets $S$ and $R$ of $\{k_1,...,k_m\}$.
	
	Let the coefficients of the corresponding nonzero terms be $a,b$ and $a',b'$, respectively. We can compare these terms according to the first condition,
	$$a= \alpha a', b = \alpha b'$$
	$$ab' = ba'$$
	
	But this means that $\alpha$ is algebraic over both $\Q(S)$ and $\Q(R)$. By applying Lemma \ref{inddia}, we see that $\alpha$ is actually algebraic over $\Q$. Furthermore, $\alpha \in \Q(k_1,...,k_m)$ which contains no proper algebraic extension of $\Q$, and so $\alpha$ must be an element of $\Q$ itself.
\end{proof}

%if f in (g_1,...,g_n) then does this similarly imply LCL among f and the g_i ??????/
\begin{corollary}
	Let $\Phi_r$ and $\Phi_s$ be two rate law polynomials from a CRN, viewed over $\Bbbk_{\mathcal Q}$ for some choice of intermediates $\mathcal Q$. If $\Phi_r$ divides $\Phi_s$, $\deg(\Phi_r) =\deg(\Phi_s)$, and both have at least two nonzero terms, then $\Phi_r = \alpha \Phi_s$ where $\alpha \in \Q$. In particular, there exists an LCL between species $r$ and $s$.
	\label{indcor}
\end{corollary}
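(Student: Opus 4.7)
The plan is to turn the divisibility hypothesis into a scalar identity $\Phi_r = \alpha \Phi_s$ over $\Bbbk_{\mathcal Q}$, promote $\alpha$ to an element of $\Q$ using Proposition~\ref{indprop}, and then read off the LCL by differentiating.

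First, since $\Phi_r$ divides $\Phi_s$ and $\deg(\Phi_r)=\deg(\Phi_s)$, the quotient polynomial in $\Bbbk_{\mathcal Q}[x_q : q\in\mathcal Q]$ has degree zero and is therefore a nonzero constant $\beta\in\Bbbk_{\mathcal Q}$. Setting $\alpha=1/\beta$ gives $\Phi_r=\alpha\Phi_s$, which is hypothesis (1) of Proposition~\ref{indprop}. Hypothesis (3) is assumed outright in the corollary. Hypothesis (2) is supplied by the first proposition of this subsection: the coefficients of distinct monomials of rate law polynomials depend on disjoint sets of rate constants. Applied to the set of nonzero monomials in $\Phi_r$ together with those in $\Phi_s$, this is exactly the disjoint-subsets condition demanded by Proposition~\ref{indprop}, whose conclusion then forces $\alpha\in\Q$.

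To extract the LCL, observe that with $\alpha\in\Q$ the $\Q$-linear combination $x_r-\alpha x_s$ satisfies
\[
\frac{d}{dt}\bigl(x_r-\alpha x_s\bigr)=\Phi_r-\alpha\Phi_s=0,
\]
so $x_r-\alpha x_s$ is a constant of motion. This is, by definition, a linear conservation law involving only the species $r$ and $s$.

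The main subtlety to track is that $\Bbbk_{\mathcal Q}$ properly extends $\Q(k_1,\ldots,k_m)$ because it also adjoins the nonintermediate concentrations $c_s$ and any conservation-law constants $T$, whereas Proposition~\ref{indprop} is formulated with a ground field of the form $\Q(k_1,\ldots,k_m)$. One therefore has to justify that the disjointness imported from the earlier proposition is enough to run the proof of Proposition~\ref{indprop} in this larger field. The key observation is that each monomial $\mathbf{x}^{\gamma}$ in the intermediate variables is supported on a unique intermediate reactant complex and hence draws its coefficient from a disjoint pool of reactions (and therefore of rate constants), so that any shared nonintermediate concentration factors can be folded into the base field without disturbing the algebraic-independence argument used in that proof.
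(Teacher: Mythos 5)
Your proposal follows the paper's proof almost step for step: equal degrees force the quotient to be a unit of $\Bbbk_{\mathcal Q}$, Proposition~\ref{indprop} promotes the scalar to $\Q$, and the LCL is read off from $\Phi_r-\alpha\Phi_s=0$. The one place you diverge is precisely the subtlety you flag at the end, and there your fix is weaker than the paper's as stated. The paper uses the same observation you make --- each $k_i$ belongs to a unique reaction and hence always appears multiplied by a fixed product of nonintermediate concentrations --- but uses it to substitute $k_i c_1\cdots c_m\mapsto\tilde k_i$; the $\tilde k_i$ are still algebraically independent, the coefficients become integer combinations of disjoint sets of the $\tilde k_i$, and Proposition~\ref{indprop} then applies verbatim over $\Q(\tilde k_1,\ldots,\tilde k_n)$ to give $\alpha\in\Q$ outright. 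Your alternative of folding the concentrations into the base field amounts to running Lemma~\ref{inddia} over $F=\Q(\{c_s\})$, and that argument only delivers $\alpha$ algebraic over $F$, hence $\alpha\in F$ --- not $\alpha\in\Q$, which is what the LCL conclusion requires. The descent from $F$ to $\Q$ is easy to supply (compare the coefficients of a single shared monomial: with $\alpha\in F$ fixed, the relation $\sum_j(n_j-\alpha n_j')k_jC_j=0$, where the $n_j,n_j'$ are the stoichiometric integers and the $k_j$ are algebraically independent over $F$, forces $\alpha=n_j/n_j'\in\Q$), but as written that last step is missing from your sketch.
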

\begin{proof}
	The coefficients of $f$ and $g$ lie in $\Bbbk_{\mathcal Q}$, and so the concentration parameters in this field prevent the immediate application of Proposition \ref{coeffprop}. However, this difficulty is relatively straightforward to overcome.
	
	 For each rate constant $k_i$, we replace products of the form $k_i c_1...c_m$ with a new parameter $\tilde k_i$. Because each rate constant is associated with a unique reaction, and each reaction with a particular set of reactants, this means that there are some fixed nonintermediate concentrations with which $k_i$ always appears, so this is really a linear substitution $k_i \mapsto \frac{\tilde k_i}{c_1...c_m}$. As such, it preserves the divisibility of $\Phi_r$ and $\Phi_s$ as well as the dependence properties of their coefficients.
	
	Following this substitution, we can apply Proposition \ref{coeffprop} over the field $\Q(\tilde k_1,...\tilde k_n)$ to see that $\alpha \in \Q$. Thus we obtain the $\Q$-linear dependence $\Phi_r - \alpha \Phi_s = 0$, which corresponds to the LCL $x_r - \alpha x_s = T$.
\end{proof}

Corollary \ref{indcor} shows that, even in this highly degenerate situation, the structure of the rate law polynomials ensures the existence of an LCL. This acts to compensate for the degeneracy by introducing a new constraint on the (quasi) steady states. It further highlights that even the algebraic independence of the nonintermediate concentrations is not important, so long as they are all nonzero.

\subsection{Specialization of Parameters}
In defining $\Bbbk$, we chose algebraically independent rate constants. Their independence is crucial to later results, but in practice some of their values may be known, and hence we need to verify that specializing some rate constants to certain values does not alter solvability. By specializing parameters, we mean setting them equal to some value in the sense of a substitution.

Pantea, Gupta, Rawlings, and Craciun remarked that it is clear that solvable solutions are preserved by specialization (in the sense that they can still be expressed in radicals over the ground field obtained after performing the specialization) because it is possible to just specialize in the general solution~\cite{pantea}. However, consider the following ``counterexample'':
$$f(x) = tx^2 + x + 1,$$
which has roots
$$\frac{-1 \pm \sqrt{1 - 4t}}{2t}.$$

We can make $f$ linear by setting $t=0$, but it is not possible to specialize the general solution without dividing by zero. While solvability is preserved in this case, we might worry that there exist other pathological specializations which could somehow cause solvability to be lost. It turns out this is not the case, as we will show below.

In one sense, knowing this fact is not critical, as QSSA is generally applied before the $k_i$ are known, and so in many situations no specialization is done while using QSSA. On the other hand, making simplifying substitutions can speed up computations. The more significant problem is that specialization will not necessarily preserve the finiteness of $V^a(I_{\mathcal Q})$, a problem which arises even in simple CRNs. This becomes important in later sections.

As we saw above, specialization may lead to division by zero. Furthermore, we run into the problem of specialization in an extension - we would like to be able to extend a specialization of the ground field to the whole extension in a ``natural way''. The following lemma addresses how this can be done and how it interacts with solvability.

\begin{lemma}
	Suppose $\mathbb L /\Bbbk$ is a Galois extension, where $\Bbbk = \Q(x_1,...,x_n,y_1,...,y_m)$ with the $x_i$ algebraically independent. If we specialize the $x_i$ parameters from $\Bbbk$ to obtain a field $k$, it is possible to extend the specialization to $\mathbb L$, resulting in some field $L$. If $\mathbb L/\Bbbk$ is solvable, so is $L/k$.
	\label{speclemma}
\end{lemma}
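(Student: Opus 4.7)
The plan is to realize the specialization $\Bbbk\to k$ as the residue-field map of a suitable valuation ring $\mathcal{O}\subset\Bbbk$, extend $\mathcal{O}$ to a valuation ring $\mathcal{O}'\subset\mathbb L$ via Chevalley's extension theorem for places, and then invoke the classical structural result that the Galois group of the residue-field extension $L/k$ is a subquotient of $\gal(\mathbb L/\Bbbk)$. Since subquotients of solvable groups are solvable, the conclusion follows.

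In more detail, I would first write $F = \Q(y_1,\ldots,y_m)$ so that $\Bbbk = F(x_1,\ldots,x_n)$ is purely transcendental over $F$. The specialization $x_i\mapsto a_i$ defines an $F$-algebra homomorphism $F[x_1,\ldots,x_n]\to k$ with kernel some prime $\mathfrak{p}$, and the localization $R = F[x_1,\ldots,x_n]_{\mathfrak{p}}$ has residue field exactly $k$. To stay in the DVR case throughout, I would handle the parameters one at a time: specializing a single $x_1\mapsto a_1$ corresponds to the DVR $F(x_2,\ldots,x_n)[x_1]_{(x_1-a_1)}$, whose residue field is $F(x_2,\ldots,x_n)$. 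Chevalley's theorem then produces a valuation ring $\mathcal{O}'\subset\mathbb L$ lying over this DVR, with residue field $L$ a finite extension of the residue field below. Iterating $n$ times composes the residue maps to realize the full specialization, and at each step the new ``lower'' field is Galois and solvable over the previous one by the argument below, so the inductive hypothesis is preserved.

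For the single-parameter step, I then invoke the classical fact (e.g., Zariski--Samuel, \emph{Commutative Algebra}, Vol.~II, Ch.~VI): for a finite Galois extension $\mathbb L/\Bbbk$ with group $G$ and a valuation-ring extension $\mathcal{O}\subset\mathcal{O}'$, the residue extension $L/k$ is normal, and the decomposition group $D = \{\sigma\in G : \sigma(\mathcal{O}')=\mathcal{O}'\}$ surjects onto $\gal(L^{s}/k)$, where $L^{s}$ denotes the maximal separable subextension of $L/k$. In characteristic zero $L^{s}=L$, yielding a surjection $D\twoheadrightarrow\gal(L/k)$. Solvability of $\mathbb L/\Bbbk$ means $G$ is solvable, so its subgroup $D$ is solvable, and hence so is the quotient $\gal(L/k)$. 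This shows $L/k$ is solvable.

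The main delicate points are valuation-theoretic. First, the extension $\mathcal{O}'$ of $\mathcal{O}$ is not canonical, but any two choices are $G$-conjugate and give isomorphic residue fields $L$, so the assertion ``$L/k$ is solvable'' is unambiguous — which is exactly the content of the lemma's phrase \emph{it is possible to extend the specialization}. Second, some care is needed when multiple $x_i$ are specialized so that the resulting residue field is genuinely the intended $k$; handling the $x_i$ one at a time (as above) sidesteps any worry about composite valuations and keeps the Galois-theoretic step uniform across iterations.
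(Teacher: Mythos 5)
Your proof is correct and follows essentially the same route as the paper's: both extend the specialization to a prime/place of $\mathbb L$ lying over the one defining $k$, invoke the surjection from the decomposition group onto $\gal(L/k)$, and conclude because solvability is closed under subgroups and quotients. The only difference is the machinery used to produce the extension --- you use valuation rings, Chevalley's theorem, and an iterated one-variable (DVR) reduction, whereas the paper passes to the integral closure of a polynomial subring and picks a maximal ideal lying over $\mathfrak p$ --- and this does not change the substance of the argument.
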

\begin{proof}
 For each $x_i$, let $t_i$ be the value to which it should be specialized. First, we lift $\mathbb L/\Bbbk$ to $\mathbb L(t_1,...,t_n)/\Bbbk(t_1,...,t_n)$. This lifting induces an embedding of $G = \gal(\mathbb L(t_1,...,t_n)/\Bbbk(t_1,...,t_n)$ into $\gal(\mathbb L/\Bbbk)$.
	
	Let $A=\mathbb Q(y_1,...,y_m)(t_1,...,t_n)[k_1,...,k_n]$ (i.e. the subring of $\Bbbk$ consisting of polynomials in the $k_i$), and $B$ its integral closure in $\mathbb L$. Let $\mathfrak p = \langle k_1-t_1,...,k_n-t_n\rangle$. Note that $\mathfrak p$ is maximal and that $A/\mathfrak p \cong k$. Let $\mathfrak P$ be a maximal ideal of $B$ lying over $\mathfrak p$. Then let $L = B/\mathfrak P$. By construction, $L/k$ is a field extension. We can think of it as extending the specialization from $\Bbbk$ to $\mathbb L$; since $\mathfrak P$ contains each $(x_i-t_i)$, we essentially identify $x_i$ with $t_i$ in the factor ring $B/\mathfrak P$. Furthermore, there is a surjective homomorphism from the decomposition group $G_{\mathfrak P}$ to $\gal(L/k)$ (Proposition 14,~\cite{antlang}). 
	
	Subgroups and homomorphic images of solvable groups are solvable. Since $G$ can be identified with a subgroup of the solvable group $\gal(\mathbb L/\Bbbk)$, it is solvable. Similarly, $G_{\mathfrak P}$ is solvable because it is a subgroup of $G$. Finally, the extension we are interested in is solvable because there is a surjection from $G_{\mathfrak P}$ onto it.
\end{proof}

Extending $\mathfrak p$ to $\mathfrak P$ extends the specialization from the ground field to the whole extension. While the choice of $\mathfrak P$ is noncanonical, the quotient field obtained is unique up to isomorphism.

When applying Lemma \ref{speclemma} in the context of QSSA, we think of the $x_i$ as rate constants ($k_i$) and the $y_i$ as the other terms adjoined to $\Bbbk_\mathcal Q$; namely, the nonintermediate concentrations ($c_i$) and constant terms of LCLs ($T$).

\begin{remark}
	From Lemma \ref{speclemma} it follows that specializing the rate constants does preserve solvability if finiteness is also preserved. We will later show that solvability can be determined from the solvability of polynomials obtained by a Gr\"obner basis calculation (Proposition \ref{solvprop}). So long as all of these polynomials exist, and are nonzero and solvable, reduction by QSSA is possible.
	
	Since the Gr\"obner basis calculations can be performed without division, they are preserved by specialization, which means we simply obtain the specialized versions of the initial polynomials - as we proved above, their solvability is preserved. However, it is entirely possible that one or more polynomials in the Gr\"obner basis becomes the zero polynomial after specializing.
\end{remark}

	Lemma \ref{speclemma} allows us to work with many CRNs at once, even those for which some rate constants are known, by treating the rate constants as parameters. Unfortunately, there can be no version of Lemma \ref{speclemma} for insolvable groups, as they are not closed under subgroups or homomorphic images. In particular, there can be specializations of the coefficients which send insolvable Galois groups to solvable Galois groups. This makes classifying  insolvable networks more difficult, as any such theory would have to also consider the possible specializations of the coefficients. Any complete classification of the networks for which QSSA is possible would also need to cover these cases, for which our techniques are not as useful.

\subsection{Simple Solvability Criteria}
While Galois theory provides a solvability criterion for single-variable polynomials, the polynomials obtained from applying the QSSA are rarely univariate, so a crucial first step is to reduce to that case. We begin by recalling several facts from commutative algebra.

\begin{proposition}[{{\cite{becker}}}]
	\label{finitevariety}
	Suppose $I$ is an ideal in $k[x_1,...,x_n]$, where $k$ is a field. Then $V^a(I)$ is finite (i.e. $I$ is a zero-dimensional ideal) if and only if $I \cap k[x_i] \neq \{0\}$ for each $1\leq i \leq n$.
\end{proposition}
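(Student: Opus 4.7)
The plan is to give a short proof by handling the two directions separately; both are standard consequences of Hilbert's Nullstellensatz and elementary linear algebra, and I would cite Becker--Weispfenning for a reference.

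For the backward direction, I would start with what is almost immediate: assume $I \cap k[x_i] \neq \{0\}$ for each $i$, and pick a nonzero $p_i \in I \cap k[x_i]$ of degree $d_i$. If $\alpha = (\alpha_1,\dots,\alpha_n) \in V^a(I)$, then in particular $p_i(\alpha_i) = 0$, so $\alpha_i$ lies in the finite set of roots of $p_i$ in $k^a$. Hence
\[
V^a(I) \;\subseteq\; \prod_{i=1}^n \{\text{roots of } p_i \text{ in } k^a\},
\]
a finite product of finite sets, which is finite.

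For the forward direction, suppose $V^a(I) = \{P_1,\dots,P_r\}$ is finite. For each coordinate index $i$, let $S_i = \{(P_1)_i,\dots,(P_r)_i\} \subseteq k^a$; each element of $S_i$ is algebraic over $k$. Let $m_\alpha(x_i) \in k[x_i]$ denote the minimal polynomial over $k$ of $\alpha \in S_i$, and set
\[
q_i(x_i) \;=\; \prod_{\alpha \in S_i} m_\alpha(x_i) \in k[x_i].
\]
By construction $q_i$ vanishes at every point of $V^a(I)$, so $q_i \in I(V^a(I))$. By Hilbert's Nullstellensatz, $I(V^a(I)) = \sqrt{I}$, so some power $q_i^{N}$ lies in $I$. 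Since $q_i$ is a nonzero polynomial in $k[x_i]$, so is $q_i^{N}$, giving $I \cap k[x_i] \neq \{0\}$.

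The only nontrivial input is Hilbert's Nullstellensatz, used to promote a polynomial vanishing on $V^a(I)$ to an actual element of $I$ (after taking a power). The main subtlety to keep track of is ensuring the polynomial $q_i$ has coefficients in the base field $k$ rather than $k^a$; this is handled by grouping Galois-conjugate roots through the minimal polynomials $m_\alpha$, so no descent argument beyond that is needed.
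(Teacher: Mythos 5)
Your proof is correct and is essentially the standard argument for this result, which the paper does not prove itself but cites to Becker--Weispfenning; both directions are handled as in the reference (roots of the univariate elements bound the variety; minimal polynomials plus the Nullstellensatz produce the univariate elements). The only point worth making explicit is that the Nullstellensatz as you invoke it lives over $k^a$, so a priori it gives $q_i^N \in I\cdot k^a[x_1,\dots,x_n]$ rather than $q_i^N \in I$; this is repaired either by noting $I\cdot k^a[x_1,\dots,x_n]\cap k[x_1,\dots,x_n]=I$ (choose a $k$-basis of $k^a$ containing $1$), or by quoting the form of the Nullstellensatz for an arbitrary field $k$ with zeros taken in the algebraic closure --- your use of minimal polynomials over $k$ already handles the only other descent issue.
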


For most CRNs arising from chemistry, there are only finitely many steady states when the LCLs are accounted for, and so $V^a(I_{\mathcal Q})$ is typically finite. When there are infinitely many solutions, QSSA will fail anyway, as this makes it impossible to select concentrations of the intermediates to substitute into the remaining equations. Furthermore, in such a situation some of the concentrations can be selected more or less freely at steady-state, which is not typical behavior for well-chosen intermediates.

Galois theory only addresses univariate polynomials, and it is possible to reduce to this case exactly when $V^a(I_{\mathcal Q})$ is finite. Passing to the single-variable case allows us to determine solvability by computing the Galois groups corresponding to generators of $I_{\mathcal Q} \cap k[x_j]$ over $k$. The computation of such a generator requires the following proposition.

\begin{proposition}
	Let $I$ be as in Proposition \ref{finitevariety}, and $G$ a reduced Gr\"obner basis for $I$ with respect to the the lexicographic ordering
	$$x_1 > x_2 > ... >  x_n.$$
	Then $I\cap k[x_n]$ is generated by $G\cap k[x_n]$, and this intersection contains only one element.
\label{elim}
\end{proposition}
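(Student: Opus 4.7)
The plan is to prove this by invoking the standard Elimination Theorem for lex Gröbner bases, then leveraging the ``reduced'' hypothesis for the uniqueness claim, and finally using the hypothesis of Proposition \ref{finitevariety} to ensure nonemptiness.

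First I would show that $G \cap k[x_n]$ generates $I \cap k[x_n]$. Let $f \in I \cap k[x_n]$ be arbitrary and nonzero. Since $G$ is a Gröbner basis of $I$, the leading term $\mathrm{LT}(f)$ is divisible by $\mathrm{LT}(g)$ for some $g \in G$. The key observation is that under the lex order $x_1 > x_2 > \cdots > x_n$, any monomial involving $x_i$ for some $i < n$ is strictly greater than any monomial in $x_n$ alone; hence if $g$ contained a term involving some $x_i$ with $i<n$, then $\mathrm{LT}(g)$ would involve $x_i$ and could not divide the pure-$x_n$ monomial $\mathrm{LT}(f)$. Therefore $g \in k[x_n]$, i.e.\ $g \in G \cap k[x_n]$. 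The standard division algorithm argument then shows that reducing $f$ by elements of $G \cap k[x_n]$ stays inside $k[x_n]$ and produces remainder $0$, so $f$ lies in the ideal generated by $G \cap k[x_n]$.

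Next I would establish that $|G \cap k[x_n]| = 1$. Suppose for contradiction that $g_1, g_2$ are two distinct elements of $G \cap k[x_n]$, with leading monomials $x_n^{a_1}$ and $x_n^{a_2}$ respectively; without loss of generality $a_1 \le a_2$. Then $\mathrm{LT}(g_1)$ divides $\mathrm{LT}(g_2)$, which contradicts the definition of a reduced Gröbner basis (in which no leading monomial of one element divides any monomial appearing in another element). Hence there is at most one element. To see that there is at least one element, recall that $I$ is assumed to be zero-dimensional; by Proposition \ref{finitevariety}, $I \cap k[x_n] \ne \{0\}$, so the generating set $G \cap k[x_n]$ cannot be empty.

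Since none of these steps involves any subtlety beyond what is already standard in Gröbner basis theory, there is no real obstacle to overcome. The only thing to be careful about is making the monomial-order argument explicit: the assignment $x_1 > x_2 > \cdots > x_n$ (rather than the opposite) is what forces the eliminated variable to be $x_n$, and it is essential that ``leading term'' in the divisibility step refers to the lex leading term and not merely the total-degree leading term. With that pinned down, the argument is complete.
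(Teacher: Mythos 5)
Your proof is correct. The paper does not actually prove this proposition---it simply cites it as a special case of the Elimination Theorem---and your argument is exactly the standard proof of that theorem for the last elimination ideal (the lex-order divisibility observation plus the division-algorithm reduction), together with a correct handling of the cardinality claim that the paper leaves implicit: at most one element because two pure powers of $x_n$ would violate reducedness, and at least one because zero-dimensionality gives $I \cap k[x_n] \neq \{0\}$ via Proposition~\ref{finitevariety}.
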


Proposition \ref{elim} is a special case of the Elimination Theorem~\cite{iva}.  Since it is possible to compute Gr\"obner bases and Galois groups over $\Bbbk$, this suggests a very basic criterion to determine whether reduction by QSSA is possible.

\begin{proposition}
	QSSA reduction can be performed if and only if for each $q\in\mathcal Q$, $I_{\mathcal Q} \cap \Bbbk[x_q] \neq \{0\}$ and the generator of this intersection is solvable over $\Bbbk$. These conditions can be verified algorithmically.
	\label{algprop}
\end{proposition}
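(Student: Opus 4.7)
The plan is to verify the biconditional by breaking the definition of ``QSSA reduction can be performed'' into its two constituent requirements — finiteness of $V^a(I_{\mathcal Q})$ and solvable-radical expressibility of every coordinate of every solution — and matching each to one half of the stated condition. Finiteness is handled directly by Proposition \ref{finitevariety}: $V^a(I_{\mathcal Q})$ is finite if and only if $I_{\mathcal Q}\cap \Bbbk[x_q]\neq \{0\}$ for every $q\in\mathcal Q$. So it remains only to characterize the radical-solvability condition in terms of the generator of this univariate intersection ideal.

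Assuming finiteness, let $f_q\in\Bbbk[x_q]$ denote the generator (unique up to a scalar) of the principal ideal $I_{\mathcal Q}\cap \Bbbk[x_q]$. The key intermediate claim is that the set of roots of $f_q$ in $\Bbbk^a$ is exactly the image of the projection $V^a(I_{\mathcal Q})\to \Bbbk^a$ onto the $x_q$ coordinate. The inclusion ``$\supseteq$'' is immediate from $f_q\in I_{\mathcal Q}$; the reverse inclusion uses that $I_{\mathcal Q}$ is zero-dimensional, so that every root of the elimination polynomial lifts to some full solution (a standard consequence of Proposition \ref{elim}). Granted this description of the roots, I would argue: if $f_q$ is solvable over $\Bbbk$, its splitting field is a solvable Galois extension containing every $\Bbbk(\alpha_q)$, hence each coordinate extension is solvable; conversely, if each $\Bbbk(\alpha_q)$ lies in a solvable Galois extension of $\Bbbk$, the compositum of the finitely many Galois closures of these extensions is still a solvable Galois extension and contains all roots of $f_q$, so $f_q$ is solvable. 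This closes both directions of the equivalence.

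For the algorithmic claim, a reduced Gr\"obner basis of $I_{\mathcal Q}$ with respect to a lexicographic order placing $x_q$ last can be computed by Buchberger's algorithm; Proposition \ref{elim} then either exhibits $f_q$ as the unique element of this basis lying in $\Bbbk[x_q]$, or certifies that $I_{\mathcal Q}\cap \Bbbk[x_q]=\{0\}$. Solvability of each nonzero $f_q$ is then decidable by any effective algorithm for computing Galois groups of univariate polynomials over a computable field (and $\Bbbk_{\mathcal Q}$ is a purely transcendental extension of $\Q$, hence computable). The main obstacle I expect is the ``lifting'' half of the root-description: one must argue that every root of $f_q$ in $\Bbbk^a$ arises as the $q$-th coordinate of an actual solution, and is not merely a ghost introduced by elimination. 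This is exactly where the zero-dimensional hypothesis is used, reinforcing why the finiteness condition must be established before invoking the solvability criterion on each $f_q$.
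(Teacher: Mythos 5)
Your proof is correct and follows essentially the same route as the paper: finiteness is equated with nontriviality of the elimination ideals $I_{\mathcal Q}\cap\Bbbk[x_q]$ via Proposition \ref{finitevariety}, the easy inclusion (coordinates of solutions are roots of the generator, since it lies in $I_{\mathcal Q}$) gives the sufficiency direction, and the algorithmic claim is handled identically via lexicographic Gr\"obner bases followed by a Galois group computation. Your converse --- showing that the roots of the generator are exactly the coordinate projection of the finite variety, so that its splitting field sits inside a compositum of solvable extensions --- is in fact slightly more explicit than the paper's, which instead points to the solvable Nullstellensatz certificate inside the proof of Proposition \ref{finitevariety} and leaves implicit that the generator divides it.
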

\begin{proof}
	As remarked earlier, QSSA reduction is not possible if $V^a(I_{\mathcal Q})$ is infinite. If it is finite, Proposition \ref{finitevariety} shows that $I_{\mathcal Q} \cap \Bbbk[x_i]$ is nontrivial for each variable $x_i$.
	
	$\Leftarrow$: Let $g_i$ be a generator of $I_{\mathcal Q} \cap \Bbbk[x_i]$ as an ideal of $\Bbbk[x_i]$. Then the zeros of $g_i$ are expressible in radicals if and only if the Galois group of $g_i$ is solvable over $\Bbbk$. Because $g_i \in I_{\mathcal Q}$, it vanishes everywhere on $V^a(I_{\mathcal Q})$, which means that $i$th coordinate of any zero of the ideal is a root $g_i$. Since $g_i$ is solvable, all of these roots must be expressible by radicals over $\Bbbk$.
	
	Conversely, if QSSA reduction is possible, then the generators of $I_{\mathcal Q}$ have finitely many zeros, all expressible by radicals. Since there are finitely many, Proposition \ref{finitevariety} ensures that $I_{\mathcal Q} \cap \Bbbk[x_q]$ is nonempty. In proving Proposition \ref{finitevariety}, the Nullstellensatz is used to show that a particular polynomial is in $I_{\mathcal Q} \cap k[x_i]$. If every point in the variety has solvable coordinates, this polynomial can be chosen such that it is solvable.
	
Let $\{x_1,...,x_n\}$ be an enumeration of the variables corresponding to the intermediate concentrations. The conditions in the statement of this proposition can be verified in two steps:
	\begin{enumerate}[leftmargin=2cm]
		\item For each $x_j$, compute a reduced Gr\"obner basis, $G$, of $I_{\mathcal Q}$ with respect to
		$$x_1 > ... > x_{j-1} > x_{j+1} > ... > x_n > x_j$$
		Let $g_j$ be the unique element of $G \cap k[x_j]$. If there is no such element for some $j$, reduction by QSSA is not possible (by Prop. \ref{finitevariety}, this would imply $V^a(I_{\mathcal Q})$ is not finite).
		
		\item Compute the Galois group of $g_j$ over $\Bbbk$, and test for solvability.
	\end{enumerate}
\end{proof}

While Proposition \ref{algprop} provides an algorithmic test for solvability, it is computationally expensive. This proposition computes $n$ Gr\"obner bases, the complexity of which is doubly exponential in the degree and number of variables. While some of our later results suggest that the computational complexity is smaller for polynomials arising from CRNs, it is preferable to avoid these computations when possible. Indeed, for many examples, the computation of the Galois group is the most time-consuming step; in one example, we did not find the Galois group even after several hours. Additionally, the algorithm available for performing this computations only works for polynomials of degree at most eight~\cite{galoisalg}. This is a further motivation to develop criteria for solvability based on the structure of the CRN, as we do in this work.

Recall that the Galois group of a polynomial $f$ of degree $n$ is determined by its action on the $n$ roots of $f$, and so it naturally embeds into $S_n$. Thus, if $n \leq 4$, the Galois group will be solvable, so a degree bound can guarantee solvability.

\begin{proposition}
	\label{solvprop}
	Consider a CRN with at-most-bimolecular kinetics, at most two intermediates, and for which $V^a(I_{\mathcal Q})$ is finite. Then it is possible to perform reduction with QSSA on this system.
\end{proposition}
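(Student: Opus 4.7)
The plan is to apply Proposition \ref{algprop}: it suffices to show that, for each intermediate $q \in \mathcal Q$, the univariate generator $g_q$ of $I_{\mathcal Q} \cap \Bbbk[x_q]$ exists and is solvable over $\Bbbk$. Existence of $g_q$ as a nonzero polynomial is immediate from the hypothesis that $V^a(I_{\mathcal Q})$ is finite together with Proposition \ref{finitevariety}. For solvability, I would reduce to a degree estimate: every polynomial of degree at most $4$ has Galois group embedded in $S_4$, which is solvable, so it is enough to prove that $\deg g_q \leq 4$ for every $q \in \mathcal Q$.

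To bound $\deg g_q$, I would use the fact that $\Bbbk[x_q]/(g_q)$ embeds into $A_{\mathcal Q}/I_{\mathcal Q}$ via the inclusion of rings, which forces $\deg g_q \leq \dim_{\Bbbk} A_{\mathcal Q}/I_{\mathcal Q}$. The remaining work is to bound this dimension by $4$ under the given hypotheses, splitting on $|\mathcal Q|$ and on the presence of LCLs. If $|\mathcal Q| \leq 1$, then the at-most-bimolecular assumption gives $\deg \Phi_q \leq 2$, so $\dim_{\Bbbk} A_{\mathcal Q}/I_{\mathcal Q} \leq \dim_{\Bbbk} \Bbbk[x_q]/(\Phi_q) \leq 2$. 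If $|\mathcal Q| = 2$ and there is no LCL among the intermediates, then $I_{\mathcal Q} = (\Phi_{q_1}, \Phi_{q_2})$ with each rate law of total degree at most $2$; the finiteness of $V^a(I_{\mathcal Q})$ rules out any positive-degree common factor of $\Phi_{q_1}$ and $\Phi_{q_2}$, since such a factor would cut out a one-dimensional affine curve inside the variety, and then Bezout's theorem yields $\dim_{\Bbbk} A_{\mathcal Q}/(\Phi_{q_1}, \Phi_{q_2}) \leq 2 \cdot 2 = 4$. If $|\mathcal Q| = 2$ and $I_{\mathcal Q}$ does contain an LCL, that LCL is a $\Q$-linear polynomial $a\, x_{q_1} + b\, x_{q_2} - T$; using it to eliminate whichever variable has a nonzero coefficient expresses one intermediate as a $\Bbbk$-linear function of the other and reduces the problem to a single rate law in one variable of degree at most $2$, giving $\deg g_q \leq 2$ for each $q$.

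The main obstacle I anticipate is the degenerate subcase where $\Phi_{q_1}$ and $\Phi_{q_2}$ share a common factor of positive degree but $V^a(I_{\mathcal Q})$ is still finite thanks to LCLs; in that situation the naive Bezout bound on $(\Phi_{q_1},\Phi_{q_2})$ fails. The resolution is simply to observe that this scenario can only occur when $I_{\mathcal Q}$ actually contains an LCL among the intermediates, so the linear-elimination argument of the previous paragraph applies and the Bezout bound is never needed. Combining all cases gives $\dim_{\Bbbk} A_{\mathcal Q}/I_{\mathcal Q} \leq 4$, hence $\deg g_q \leq 4$ for every intermediate $q$, and Proposition \ref{algprop} then produces the desired QSSA reduction.
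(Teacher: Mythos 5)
Your proof is correct and takes essentially the same route as the paper's: reduce to the univariate generators $g_q$ of $I_{\mathcal Q}\cap\Bbbk[x_q]$ and bound their degree by $4$ via B\'ezout, so that each Galois group embeds in the solvable group $S_4$. You are in fact more careful than the paper, which invokes B\'ezout directly on the two rate law polynomials without addressing the possibility of a common factor or the presence of an LCL; your separate treatment of the LCL case (linear elimination down to degree $\leq 2$) and your observation that finiteness of $V^a(I_{\mathcal Q})$ rules out common components in the LCL-free case fill in details the paper leaves implicit.
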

\begin{proof}
	If there is just one intermediate, $I_{\mathcal Q}$ is generated by a (single) linear or quadratic univariate polynomial, which is certainly solvable.
	
	Now assume there are two intermediates, whose concentrations we will denote by $x_1$ and $x_2$. Here we work in $\Bbbk[x_1,x_2]$. We know that $I_{\mathcal Q}$ is generated by the two rate law polynomials, $\Phi_1$ and $\Phi_2$, and possibly some linear conservation laws. Due to the choice of kinetics, both $\Phi_1$ and $\Phi_2$ have degree at most two.
	
	By hypothesis, $I_{\mathcal Q}\cap \Bbbk[x_1]$ and $I_{\mathcal Q} \cap \Bbbk[x_2]$ are nonzero ideals, and thus have nonzero generators $g_1$ and $g_2$, respectively. It suffices to show both polynomials have solvable Galois groups. This we will do with a degree bound. From Bezout's theorem, we know that the degrees of $g_1$ and $g_2$ are bounded by product of the degrees of the generators of $I_{\mathcal Q}$ - this is at most 4, as $I_{\mathcal Q}$ has just two generators, neither of which have degree greater than two.
	
	Since this means that $g_1$ and $g_2$ each have degree at most $4$, both of their Galois groups must be solvable over $\Bbbk$.
\end{proof}

\subsection{Criterion for Finitely Many Quasi-Steady-States}
The results thus far highlight the importance of ensuring that the number of quasi-steady-states is finite when applying QSSA, which is equivalent to determining when $V^a(I_{\mathcal Q})$ is finite. This motivates the following theorem, which shows that the finiteness of $V^a(I_{\mathcal Q})$ is guaranteed for two intermediates under a few mild conditions.

Physically, we would not expect infinitely many (quasi) steady states, so it seems reasonable that Theorem \ref{finitethm} will generalize (perhaps with slightly different hypotheses). However, the method of proof does not appear to generalize, even to the case of three intermediates. On the other hand, this theorem shows that there are finitely many solutions in the algebraic closure, but CRNs only have solutions with nonnegative real concentrations. It is possible that there is a network with a choice of intermediates for which there are infinitely many solutions in $\Bbbk^a$ but only finitely many which are in $\R$ or $\R_{\geq 0}$.

As mentioned earlier, results like Theorem \ref{finitethm} below are useful from a computational perspective. It is known that the computational complexity of finding a Gr\"obner basis is lower for zero dimensional ideals~\cite{hashemi}, so this would indicate that calculating steady states arising from CRNs may be more tractable than finding Gr\"obner bases for more general ideals, though only when specialized to actual numbers, as the genericity of the coefficients increases the complexity. Since the algorithm outlined earlier requires computing $n$ Gr\"obner bases, it would be helpful to have a reasonable assurance that the Gr\"obner basis calculations will be quick, even if the Galois group is insolvable.

We begin with a theorem which shows that for most choices of intermediates, there are only finitely many nonboundary steady states. To state this algebraically, we require a few more constructions from commutative algebra:

\begin{definition}
	Let $I$ and $J$ be ideals in a ring $A$. Then the \textbf{ideal quotient} of $I$ and $J$, denoted $(I:J)$ is the following ideal:
	$$(I:J) = \{a \in A\ |\ aJ \subseteq I\}.$$
\end{definition}

\begin{definition}
	Let $I$ and $J$ be ideals in a ring $I$. The \textbf{saturation} of $I$ with respect to $J$, denoted $(I:J^\infty)$ is defined as:
	$$\bigcup_{n=1}^\infty (I:J^n).$$
\end{definition}

It can be shown that
$$(I:J) \subseteq (I:J^2) \subseteq ...$$

Which means that, in the case of a Noetherian ring, only finitely many of the $(I:J^k)$ need to be computed to determine $(I:J^\infty)$. Algorithms exist to perform this computation, so our results usuing these ideals are also constructive~\cite{iva,becker}.

Saturation is what will allow us to exclude boundary solutions:

\begin{proposition}[{{\cite{iva}}}]
	Over an algebraically closed field, $V(I:J^\infty) = \overline {V(I) - V(J)}$.
	\label{satprop}
\end{proposition}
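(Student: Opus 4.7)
The plan is to prove the two inclusions separately, working throughout over the algebraically closed field $k$, with $I, J$ ideals of $k[x_1,\ldots,x_n]$. The Noetherian hypothesis (implicit, since we're in a polynomial ring) will be used in several places. Throughout, I rely on the fact that $V(I:J^\infty)$ is automatically Zariski-closed, so for the forward inclusion it suffices to establish $V(I) \setminus V(J) \subseteq V(I:J^\infty)$ and then take closures.

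For the inclusion $\overline{V(I) \setminus V(J)} \subseteq V(I:J^\infty)$, let $p \in V(I) \setminus V(J)$ and $f \in (I:J^\infty)$. By the ascending-chain characterization of saturation, there exists $N$ with $f \cdot J^N \subseteq I$. Since $p \notin V(J)$, pick $g \in J$ with $g(p) \neq 0$; then $f g^N \in I$, so $(fg^N)(p) = f(p)\, g(p)^N = 0$, forcing $f(p) = 0$. This gives $p \in V(I:J^\infty)$, and taking closure yields the inclusion.

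For the reverse inclusion $V(I:J^\infty) \subseteq \overline{V(I)\setminus V(J)}$, I would argue by contrapositive using the Nullstellensatz. Suppose $f \in k[x_1,\ldots,x_n]$ vanishes on $\overline{V(I)\setminus V(J)}$; I aim to show that some power $f^m$ lies in $(I:J^\infty)$, which will force $f$ to vanish on $V(I:J^\infty)$ as desired. For any $g \in J$, the product $fg$ vanishes on all of $V(I)$: at points of $V(I) \setminus V(J)$ we have $f = 0$, and at points of $V(I) \cap V(J)$ we have $g = 0$. By the Nullstellensatz, $(fg)^{m_g} \in I$ for some $m_g$. The key step is to pass from this pointwise control to a uniform exponent: because $k[x_1,\ldots,x_n]$ is Noetherian, write $J = (g_1,\ldots,g_r)$, take $m = \max_i m_{g_i}$, so that $f^m g_i^m \in I$ for each $i$. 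Any generator of $J^{rm}$ is a monomial $g_1^{a_1}\cdots g_r^{a_r}$ with $\sum a_i = rm$, so by pigeonhole some $a_i \geq m$; hence $f^m \cdot g_1^{a_1}\cdots g_r^{a_r}$ is divisible by $f^m g_i^m \in I$ and so lies in $I$. Thus $f^m J^{rm} \subseteq I$, giving $f^m \in (I:J^\infty)$.

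The main obstacle is precisely this passage from "for each $g \in J$, some power of $fg$ lies in $I$" to "a single power of $f$ times $J^N$ lies in $I$"; this is where finite generation of $J$ (equivalently, the Noetherian hypothesis) is essential, and where a reader who skips the pigeonhole step might accept a circular argument. Once $f^m \in (I:J^\infty)$ is established, the conclusion $f(p) = 0$ for any $p \in V(I:J^\infty)$ is immediate, and the Nullstellensatz-based characterization $\overline{S} = V(\mathcal{I}(S))$ then packages the contrapositive into the desired inclusion.
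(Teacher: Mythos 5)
Your proof is correct and complete: the forward inclusion via a nonvanishing $g\in J$ at $p$, and the reverse inclusion via the Nullstellensatz plus the pigeonhole step to get a uniform power $f^m$ with $f^mJ^{rm}\subseteq I$, is exactly the standard argument. The paper itself offers no proof of this proposition --- it is quoted directly from the cited reference (Cox, Little, and O'Shea) --- and your argument matches the one given there, including the correct identification of where algebraic closedness (the Nullstellensatz) and finite generation of $J$ are used.
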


In our case, we are working in $B_{\mathcal Q} = \Bbbk [x_1,...,x_n]$. It is clear that $x_1x_2...x_n = 0$ exactly when one of the $x_i$ is zero, and so the boundary quasi-steady-states are contained in $V(x_1x_2...x_n)$. Then Proposition \ref{satprop} says that, up to Zariski closure, the zeros of $(I_{\mathcal Q}: (x_1x_2...x_n)^\infty)$ are the nonboundary quasi-steady-states of $I_{\mathcal Q}$.

\begin{theorem}
	Consider a CRN with at-most-bimolecular kinetics and a set of intermediates $\mathcal Q = \{q_1,q_2\}$, such that the corresponding rate law polynomials are neither constant nor both univariate in the same variable. Then $V^a(I_{\mathcal Q}: (x_{q_1}x_{q_2})^\infty)$ is finite, and each zero is solvable over $\Bbbk$.	
	\label{finitethm}
\end{theorem}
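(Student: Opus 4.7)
Write $x_1 = x_{q_1}$ and $x_2 = x_{q_2}$. The plan is to decouple the two claims and handle solvability once finiteness is in hand. The saturation $(I_{\mathcal Q}:(x_1 x_2)^\infty)$ contains the rate law polynomials with their maximal $x_1,x_2$-monomial factors divided out, and these stripped polynomials still have total degree at most $2$, so a Bezout argument exactly like the one used in Proposition~\ref{solvprop} yields a degree-$\le 4$ generator of $(I_{\mathcal Q}:(x_1 x_2)^\infty)\cap\Bbbk[x_i]$ for each $i\in\{1,2\}$. Its Galois group embeds in $S_4$ and is therefore solvable, so it suffices to prove finiteness.

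Since $\Bbbk[x_1,x_2]$ is a UFD and any LCL present in $I_{\mathcal Q}$ is linear, the saturation is zero-dimensional iff every irreducible common factor $p$ of $\Phi_{q_1}$ and $\Phi_{q_2}$ in $\Bbbk[x_1,x_2]$ is either a scalar multiple of $x_1$ or $x_2$, or is rendered harmless by an LCL (itself already in $I_{\mathcal Q}$ by Corollary~\ref{indcor}) that is coprime to $p$. I would handle this by case analysis on $\deg p$. If $\deg p = 2$, then $p$ is an irreducible quadratic with at least two nonzero monomials and both $\Phi_{q_1},\Phi_{q_2}$ are nonzero scalar multiples of it, each having at least two nonzero terms; Corollary~\ref{indcor} then yields $\Phi_{q_1} = \lambda\Phi_{q_2}$ with $\lambda\in\Q$, producing the LCL $x_1 - \lambda x_2 = T$. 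Because $T$ is algebraically independent of the rate constants while $\Phi_{q_1}$ does not involve $T$, expanding a hypothetical factorization $\Phi_{q_1} = (x_1 - \lambda x_2 - T)\cdot l$ in powers of $T$ and matching coefficients forces $\Phi_{q_1} = 0$, a contradiction; thus $\gcd(\Phi_{q_1},\mathrm{LCL}) = 1$ and Bezout bounds the variety by $2$ points.

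If $\deg p = 1$ and $p = a x_1 + b x_2 + c$ is not a scalar of $x_1$ or $x_2$, I would perform the substitution $k \mapsto \tilde k$ from the proof of Corollary~\ref{indcor} to absorb non-intermediate concentrations into the rate constants. After this substitution each coefficient of $\Phi_{q_i}$ is a $\Q$-linear form in the $\tilde k$'s, and coefficients of distinct monomials within the same $\Phi_{q_i}$ are supported on disjoint subsets of the $\tilde k$'s. The vanishing $\Phi_{q_i}|_{V(p)} = 0$ then yields polynomial identities whose terms, when expanded, have pairwise distinct multi-degrees across the disjoint blocks and therefore split into independent block equations that must each vanish. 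Careful bookkeeping shows that every resulting configuration falls into one of two types: either enough coefficients of the $\Phi_{q_i}$ are forced to zero that both $\Phi_{q_1},\Phi_{q_2}$ become univariate in the same variable (excluded by hypothesis), or the nonzero corresponding coefficients of $\Phi_{q_1}$ and $\Phi_{q_2}$ are forced into a common rational ratio $\lambda$, giving $\Phi_{q_1} = \lambda\Phi_{q_2}$ and reducing to the LCL situation above.

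The hard part is this degree-$1$ case: it splits into several genuinely distinct sub-cases depending on which of $a,b,c$ vanish, with the homogeneous sub-case $c = 0$ the subtlest because $V(p)$ passes through the origin and cannot be absorbed into the boundary by saturation alone. The technical core throughout is a repeated application of the rank-one principle underlying Corollary~\ref{indcor}: any identity $X_1 Y_2 = X_2 Y_1$, where $X_i$ and $Y_i$ are $\Q$-linear forms in disjoint blocks of $\tilde k$'s, forces $X_1 = \lambda X_2$ and $Y_1 = \lambda Y_2$ for a common $\lambda \in \Q$.
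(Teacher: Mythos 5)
Your reduction of the problem to the structure of the common irreducible factors of $\Phi_{q_1}$ and $\Phi_{q_2}$ is a legitimate and arguably cleaner organization than the paper's (which instead cases on the shape of the $\Phi$'s: presence of an LCL, one linear, both quadratic, and then on how Gauss's lemma can fail). Your degree-$2$ case is complete and matches the paper's Case 1 in substance: Corollary~\ref{indcor} forces proportionality over $\Q$, the resulting LCL enters $I_{\mathcal Q}$, and the $T$-independence argument shows the LCL line is coprime to the conic. The solvability-from-finiteness paragraph is also fine in outline, provided you are explicit that Bezout is being applied to a zero-dimensional subideal generated by the stripped polynomials (or by $p$ and the LCL), not to the saturation itself.

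The gap is the degree-$1$ case with $p$ not an associate of $x_1$ or $x_2$: this is not a loose end, it is the theorem. Your entire treatment of it is the sentence ``careful bookkeeping shows that every resulting configuration falls into one of two types,'' and that bookkeeping is precisely what occupies the paper's Cases 2, 3a, and 3b(i)--(iii) --- repeated substitution of roots of $p$ into the other rate law polynomial, followed by term-by-term independence arguments that branch on which of roughly a dozen coefficients vanish. Beyond being unexecuted, your claimed two-branch dichotomy (both $\Phi$'s univariate in the same variable, or $\Phi_{q_1}=\lambda\Phi_{q_2}$) is not obviously exhaustive: in the paper's parallel analysis several configurations terminate in a third way, with the saturation acquiring a nonzero constant (empty variety) or a pair of $\Bbbk$-independent linear forms, rather than in proportionality or univariateness. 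You would need either to add such a branch or to prove that those outcomes are incompatible with $p$ being a genuine common factor (as opposed to the partial divisibility conditions the paper works with). You correctly flag the homogeneous sub-case $p=\alpha x_1+\beta x_2$ as the subtle one, but flagging it is not resolving it; until the case analysis is actually carried out and shown to close, the proof is incomplete.
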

\begin{proof}
	For simplicity of notation, we will use $x$ and $y$ instead of $x_{q_1}$ and $x_{q_2}$.
	
	It suffices to find some ideal $J \subseteq (I_{\mathcal Q} : (xy)^\infty)$ such that $V^a(J)$ has finitely many solutions, since $J\subseteq (I_{\mathcal Q} : (xy)^\infty)$ implies $V^a(I_{\mathcal Q} : (xy)^\infty) \subseteq V^a(J)$. Equivalently, it is enough to produce some collection of polynomials in $(I_{\mathcal Q} : (xy)^\infty)$ that have finitely many zeros. In most cases, $I_{\mathcal Q}$ itself will be sufficient.
	
	To show that $(I_{\mathcal Q} : (xy)^\infty)$ is zero-dimensional, it is enough to find two nonzero polynomials $f,g \in (I_{\mathcal Q} : (xy)^\infty)$ such that $g$ is not a zero divisor in $B_{\mathcal Q}/(f)$. Given such polynomials, Krull's Principal Ideal Theorem (Corollary 11.17 in~\cite{atiyah}) implies that $\textrm{codim}((I_{\mathcal Q} : (xy)^\infty)) \geq 2$, and since $\dim(B_\mathcal Q) = 2$, this is implies $\dim(A/(I_{\mathcal Q} : (xy)^\infty)) \leq 0$. This is easiest to accomplish in the case that $f$ is irreducible, as then $B_{\mathcal Q}/(f)$ is an integral domain, so we only need to check that $g \not \in (f)$.
	
	The proof is broken up into the cases: when there is an LCL, when one of the rate law polynomials is linear, and when both rate law polynomials are quadratic. As a general rule, the proofs proceed by carefully analyzing the way in which the generators if $I_{\mathcal Q}$ are insufficient to apply the argument above, and showing that the ways in which they fail to do so imply the existence of other polynomials which are sufficient.
	
	Let the two rate law polynomials in $I_{\mathcal Q}$ be
	\begin{align*}
	\Phi_{x}(x,y) = ax^2  + by^2  + cxy  + dx  + ey  + f\\
	\Phi_{y}(x,y) = a'x^2 + b'y^2 + c'xy + d'x + e'y + f'	.
	\end{align*}
	
%%%%%%%%%%
% Case 1 %
%%%%%%%%%%
\textbf{Case 1: there is an LCL}. Write the LCL as $sx+ry = T$ where $s,r \in \Q$. Since linear polynomials are always irreducible, we would like to show that $sx - ry - T$ does not divide $\Phi_x$.
	
	Both $s$ and $r$ must be nonzero, so the LCL has a root at $(T/s,0)$. Substitute this into $\Phi_x$ and clear the fraction:
$$s^2\Phi_x(T/s,0) = aT^2 + sdT + s^2f.$$

If this is nonzero, then the LCL does not divide $\Phi_x$, and we will be done. Suppose otherwise. Since $s \neq 0$ is rational and $T$ is algebraically independent of $a,d,$ and $f$, it must be the case that $f=d=a = 0$. This means that $\Phi_x = by^2 + cxy + ey = y(by+cx+e)$, and hence $by + cx + e \in (I_{\mathcal Q}:(xy)^\infty)$. This means the following pair of linear equations are in $(I_{\mathcal Q}:(xy)^\infty)$:
\begin{align*}
sx + ry &= T\\
cx + by &= -e.
\end{align*}

This has finitely many solutions - all solvable - if the determinant of the corresponding linear transformation is nonzero. Suppose otherwise:
$$sb - rc = 0.$$

Well, $s$ and $r$ are nonzero, while $b$ and $c$ are algebraically independent or zero, and so this is only possible if $b=c=0$. This means $\Phi_x(x,y) = e$, violating the assumption that the rate law polynomials were nonconstant.

%%%%%%%%%%
% Case 2 %
%%%%%%%%%%
\textbf{Case 2: either $\Phi_x$ or $\Phi_y$ is linear}. Without loss of generality, assume that $\Phi_x$ is linear:
$$\Phi_x(x,y) = dx + ey + f.$$

Further, one of $d$ or $e$ is nonzero, as otherwise $\Phi_x$ would be constant. Assume, without loss of generality, that $d\neq 0$. This means $\Phi_x$ has root $(-f/d,0)$. Since $\Phi_x$ is linear, it is irreducible, so it only remains to check that it does not divide $\Phi_y$. Suppose otherwise, and so $\Phi_y$ must vanish on this root too. Making the substitution and clearing the fraction, we see:
$$d^2\Phi_y(-f/d,0) = a'f^2 - dd'f + d^2f' = 0.$$

Examining the first term, we see that $f=0$ or $a'=0$ - if both were nonzero, $a'f^2$ would be nonzero, hence algebraically independent of all the other terms.

If $f=0$, then $\Phi_x(x,y) = dx + ey$. Note that this means $e\neq 0$. Otherwise $d\in (I_{\mathcal Q}:(xy)^\infty)$, and hence $(I_{\mathcal Q}:(xy)^\infty) = (1)$, so it will have no solutions. We see that $\Phi_x$ has a root at $(e/d,-1)$, and so we will test this root as well in $\Phi_y$:
$$d^2\Phi_y(e/d,-1) = a'e^2 + d^2b' - c'ed - dd'e + e'ed + f'.$$
Since $d,e\neq 0$, we must have $a' = b' = c' = d' = e' = f'
 = 0$ and hence $\Phi_y = 0$, contradicting our assumption that it is nonconstant.

If $a' = 0$, then the above becomes $d^2f' = dd'f$. We may also assume $f\neq 0$, since the other case is covered above. Then it must be that $f' = \alpha f$ and $d' = \beta d$ for some rational numbers $\alpha,\beta$. Comparing the two terms, it is clear that $\alpha =\beta$. Combining these facts, we know
$$\Phi_y(x,y) - \alpha \Phi_x(x,y) = b'y^2 + c'xy = y(b'y + c'x),$$
so $b'y + c'x \in (I_{\mathcal Q}:(xy)^\infty)$. This again reduces us to the case in which two linear polynomials are in $(I_{\mathcal Q}:(xy)^\infty)$, so the zeros of $I_{\mathcal Q}$ are among the solutions of the system of equations
\begin{align*}
dx + ey &= -f\\
c'x + b'y &= 0.
\end{align*}

There are infinitely many solutions only if $b'd = c'e$. Since $d$ is nonzero, this requires $b' = 0$, which means $e=0$ or $c' = 0$. In the first case, $(I_{\mathcal Q}:(xy)^\infty)$ will contain the nonzero element $d$, and so there will be no zeros at all. In the second case, $b' \in (I_{\mathcal Q} : (xy)^\infty)$, so we are done unless $b'=0$. Together, we get $\Phi_y(x,y) = 0$, a contradiction.

%%%%%%%%%%
% Case 3 %
%%%%%%%%%%
\textbf{Case 3: both $\Phi_x$ and $\Phi_y$ have (total) degree 2}. In this case, we immediately rule out $\Phi_x \vert \Phi_y$ - this implies the existence of an LCL, which was treated in Case 1. Hence if either is irreducible, we are done. It remains to see what happens when this is not the case.
	
\textbf{Case 3a: neither polynomial have $x^2$ or $y^2$ terms.} In particular,
$$\Phi_x(x,y) = cxy + dx + ey + f.$$

If this is reducible, then it factors as $c(x + \alpha)(y + \beta)$. By expanding this and comparing terms:
\begin{align*}
\alpha &= d/c\\
\beta  &= e/c\\
\alpha\beta &= f/c.	
\end{align*}

From which we see $de = f$. This is not possible unless $f=0$ and $d=0$ or $e=0$.

Repeating the analysis for $\Phi_y$, we arrive at $f'=0$ and $d'=0$ or $e'=0$.

Suppose that $d=d'=0$, in which case
\begin{alignat*}{3}
\Phi_x(x,y) &= y(cx + e) &\ \ \Rightarrow\ \ & cx + e \in (I_{\mathcal Q}:(xy)^\infty)\\
\Phi_y(x,y) &= y(c'x + e') &\ \ \Rightarrow\ \ & c'x + e' \in (I_{\mathcal Q}:(xy)^\infty)
\end{alignat*}

This will only have infinitely many solutions if $c=c'$ and $e=e'$, but then $\Phi_x = \Phi_y$, and so there is an LCL, which is covered by Case 1.

Now suppose that $d = e' = 0$ instead. This results in:
\begin{alignat*}{3}
\Phi_x(x,y) = y(cx + e)&\ \ \Rightarrow\ \ & cx + e \in (I_{\mathcal Q}:(xy)^\infty)\\
\Phi_y(x,y) = x(c'y + d')&\ \ \Rightarrow\ \ & c'y + d' \in (I_{\mathcal Q}:(xy)^\infty)
\end{alignat*}

Since both $\Phi_x$ and $\Phi_y$ are nonconstant, these two equations are also nonconstant, and so they certainly only have finitely many solutions in $x$ and $y$, all of which are solvable.

The remaining cases, $e=e'=0$ and $d'=e=0$ are symmetric to those already discussed. In each subcase, $(I_{\mathcal Q}:(xy)^\infty)$ has finitely many zeros, all solvable over $\Bbbk$.

\textbf{Case 3b: At least one of $\Phi_x$ or $\Phi_y$ has an $x^2$ or $y^2$ term.} Without loss of generality, suppose that it is $\Phi_x$, and that $a\neq 0$.

This is the most complicated case. We will explore the ways in which Gauss's lemma can fail to show that a rate law polynomial is irreducible, and in each case construct members of $(I_{\mathcal Q}:(xy)^\infty)$ which are enough to show that $V^a((I_{\mathcal Q}:(xy)^\infty))$ that will suffice.

View $\Phi_x$ as a polynomial in $(\Bbbk^a[y])[x]$:

$$\Phi_x(x,y) = ax^2 + (cy + d)x + (by^2 + ey + f).$$

It is primitive, hence irreducible in $B_{\mathcal Q} = \Bbbk^a[x,y]$ if and only if it is irreducible over $\Bbbk^a(y)$. Since it is quadratic, it is irreducible if and only if it has a root in the ground field ($k^a(y)$), which is the case if and only if its discriminant is a square.

Suppose the discriminant is $(\alpha y + \beta)^2$:
$$\mathrm{disc}(\Phi_x) = (cy+d)^2 - 4a(by^2+ey+f) = (\alpha y + \beta)^2$$

Comparing terms:
\begin{align*}
\alpha^2    &= c^2 - 4ab\\
\beta^2     &= d^2 - 4af\\
\alpha\beta &= cd  - 2ae.
\end{align*}

From this, we compute $\alpha^2\beta^2$ in two ways,
\begin{align*}
\alpha^2\beta^2 &= c^2d^2 - 4abd^2 - 4afc^2 + 16a^2bf\\
(\alpha\beta)^2 &= c^2d^2 - 4acde + 4a^2e^2.
\end{align*}

Equate the two and simplifying:
$$bd^2 + fc^2 - 4abf = cde - ae^2$$

(recalling that $a\neq 0$, so we can cancel $-4a$)

Much as before, we make use of algebraic independence. From the $4ae^2$ term, we see that $e=0$. This leaves
$$bd^2 + fc^2 - 4abf = 0.$$

From the last term, $b=0$ or $f=0$. If $b=0$, then $f=0$ or $c=0$, due to the $fc^2$ term. If $f=0$, then $b=0$ or $d=0$.

We can collect these into three cases: $e=b=f=0,e=b=c=0,e=f=d=0$ - note that the $e=b=f=0$ case arises twice above.

\textbf{Case 3bi: $e=b=f=0$}. This lets us write
$$\Phi_x(x,y) = x(ax + cy + d) \Rightarrow ax + cy + d\in (I_{\mathcal Q}:(xy)^\infty).$$

Since $a$ is nonzero, we know that $(-d/a,0)$ is a root. We repeat the trick of substituting this into $\Phi_y(x,y)$:
$$a^2 \Phi_y(-d/a,0) = a'd^2 - add' + a^2f'$$

Since $a\neq 0$, this requires $f' = 0$ and $d = 0$ or $d' = 0$. In the former case, we see $\Phi_x(x,y) = ax + cy$. If $c=0$, then $a\in (I_{\mathcal Q}:(xy)^\infty)$, in which case there is nothing to prove. As such, we assume $c\neq 0$, and note that there is a root at $(-c/a,1)$. Making this substitution in $\Phi_y(x,y)$:
$$a^2\Phi_y(-c/a,1) = c^2a' + a^2b' - acc' - acd' + a^2e' + a^2f'.$$

Since $a,c\neq 0$, this implies $b'=d'=e'=f'=0$, hence $\Phi_y(x,y) = x(a'x + c'y)$, so $a'x + c'y \in (I_{\mathcal Q}:(xy)^\infty)$. We again obtain a pair of linear polynomials in the ideal:
\begin{align*}
ax + cy   &= 0\\
a'x + c'y &= 0.
\end{align*}

This has finitely many solutions unless $ac' = a'c$, which requires $a' = \alpha a$ and $c' = \alpha c$ for some rational $\alpha$. Substituting this back into $\Phi_x,\Phi_y$ reveals $\Phi_y = \alpha \Phi_x$, and hence the existence of an LCL, which was already treated in Case 1.

\textbf{Case 3bii: $e=b=c=0$}. This lets us write
$$\Phi_x(x,y) = ax^2 + dx + f$$

From this, we see that there are only two possible solutions for $x$ (both of which are solvable).

Compare:
$$\Phi_y(x,y) = b'y^2 + (c'x + d')y + (a'x^2 + d'x + f')$$

Now, if $b'\neq 0$, then for any fixed $x \in \Bbbk^a$, this polynomial has just two roots, both solvable over $\Bbbk(x)$, and hence over $\Bbbk$ (as towers of solvable extensions are solvable).

Similarly, if $c'x + d' \neq 0$ for a fixed root $v$ of $\Phi_x$, there are still only finitely many solutions in $y$. Suppose that $c'v + d = 0$. If $c'$ is nonzero, it is algebraically independent of $a,d,d',f$, and hence we arrive at a contradiction unless $v = 0$. This requires $f=0$, hence $\Phi_x(x,y) = x(ax + d)$, so $ax +d \in (I_{\mathcal Q}:(xy)^\infty)$. The only root of this is $-d/a$. Applying analogous reasoning to this root:
$$-c'\frac d a + d' = 0$$
$$c'd = ad'$$
By hypothesis, $a,c' \neq 0$, so it must be the case that $d=d'=0$. This leaves $\Phi_x(x,y) = ax^2$, and so $a \in (I_{\mathcal Q} : (xy)^\infty)$, and we are done. This finally brings us to the case in which $b'=c'=d'=0$. Then both $\Phi_x$ and $\Phi_y$ are univariate in $x$, contradicting the hypothesis that both variables occur among the rate law polynomials.

\textbf{Case 3biii: $e=d=f=0$}. This lets us write
$$\Phi_x(x,y) = ax^2 + by^2 + cxy$$

This case is simple - if it is reducible, then $c=2\sqrt{ab}$. As $a$ is nonzero, this entais $b=c=0$, so $\Phi_x(x,y) = ax^2$, hence $a \in (I_{\mathcal Q} : (xy)^\infty)$, and we are done.

We have seen that, even outside the ``good'' cases, $(I_{\mathcal Q}:(xy)^\infty)$ has only finitely many zeros. In the cases where $I_{\mathcal Q}$ itself has finitely many solutions, we know by Proposition \ref{solvprop} that they are all solvable. When this was not the case, we showed that there are finitely many zeros, which were the roots of linear or quadratic polynomials, hence solvable.
\end{proof}

In fact, for many choices of intermediates, it is not even necessary to remove the boundary steady states by passing from $I_{\mathcal Q}$ to $(I_{\mathcal Q} : (xy)^\infty)$.

\begin{corollary}
	Let $\mathcal Q = \{q_1,q_2\}$ be a set of intermediates in a CRN satisfying the hypotheses of Theorem \ref{finitethm}. If additionally one of the rate law polynomials has a nonzero constant term, then $V^a(I_{\mathcal Q})$ is finite.
	\label{finitethmstar}
\end{corollary}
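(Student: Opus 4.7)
The plan is to leverage Theorem \ref{finitethm} together with the observation that the only points in $V^a(I_{\mathcal Q})$ not already accounted for by $V^a(I_{\mathcal Q}:(x_{q_1}x_{q_2})^\infty)$ are boundary solutions, which the constant-term hypothesis easily rules out en masse.

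First I would decompose the variety. Writing $x = x_{q_1}$ and $y = x_{q_2}$, Proposition \ref{satprop} gives
$$V^a(I_{\mathcal Q}:(xy)^\infty) = \overline{V^a(I_{\mathcal Q}) \setminus V^a(xy)},$$
so any non-boundary solution of $I_{\mathcal Q}$ lies in this saturated variety, which is finite by Theorem \ref{finitethm}. The remaining points of $V^a(I_{\mathcal Q})$ satisfy $x=0$ or $y=0$, giving the inclusion
$$V^a(I_{\mathcal Q}) \subseteq V^a(I_{\mathcal Q}:(xy)^\infty) \;\cup\; V^a(I_{\mathcal Q} + (x)) \;\cup\; V^a(I_{\mathcal Q} + (y)).$$

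Second, I would bound the two boundary strata using the nonzero constant term. Assume without loss of generality that $\Phi_{q_1}$ has nonzero constant term $f$, and write $\Phi_{q_1}(x,y) = ax^2 + by^2 + cxy + dx + ey + f$. Restricting to $x=0$ yields $\Phi_{q_1}(0,y) = by^2 + ey + f$, which is a nonzero polynomial in $y$ (since $f \neq 0$) with at most two roots. Hence every point of $V^a(I_{\mathcal Q}+(x))$ has its $y$-coordinate among these roots, and so this stratum is finite. The symmetric calculation $\Phi_{q_1}(x,0) = ax^2 + dx + f$ is likewise a nonzero polynomial, so $V^a(I_{\mathcal Q}+(y))$ is also finite.

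Combining the three finite sets in the decomposition above shows that $V^a(I_{\mathcal Q})$ itself is finite, completing the argument. There is essentially no obstacle here; the only point that merits care is correctly interpreting the Zariski-closure appearing in Proposition \ref{satprop} so that boundary points of $V^a(I_{\mathcal Q})$ which do not survive the saturation are nevertheless accounted for by the explicit strata $V^a(I_{\mathcal Q}+(x))$ and $V^a(I_{\mathcal Q}+(y))$, each of which the constant-term assumption renders zero-dimensional.
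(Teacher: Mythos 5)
Your proposal is correct and follows essentially the same route as the paper: invoke Theorem \ref{finitethm} to handle the non-boundary solutions, then observe that the nonzero constant term makes the restrictions $\Phi_{q_1}(0,y)$ and $\Phi_{q_1}(x,0)$ nonzero univariate polynomials of degree at most two, so each boundary stratum is finite. The explicit decomposition into the saturation plus the two strata $V^a(I_{\mathcal Q}+(x))$ and $V^a(I_{\mathcal Q}+(y))$ is just a slightly more formal packaging of the paper's argument.
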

\begin{proof}
	By applying Theorem \ref{finitethm}, we know that there are finitely many solutions off the boundary, so it suffices to verify that there are only finitely many boundary steady states. For simplicity, let $x = x_{q_1}$, $y = x_{q_2}$ and the rate law polynomials be
	\begin{align*}
	\Phi_{x}(x,y) = ax^2  + by^2  + cxy  + dx  + ey  + f\\
	\Phi_{y}(x,y) = a'x^2 + b'y^2 + c'xy + d'x + e'y + f'	.
	\end{align*}

	Without loss of generality, suppose that $f\neq 0$. Now fix $x=0$:
	$$\Phi_x(0,y) = by^2 + ey + f.$$
	
	This has only finitely many solutions in $y$. Similarly, if we fix $y=0$:
	$$\Phi_x(x,0) = ax^2 +dx + f,$$
	
	which has only finitely many solutions in $x$.
\end{proof}

\begin{corollary}
	For CRNs with at-most-bimolecular kinetics, reduction by QSSA is always possible when there is just one intermediate, or when there are two intermediates satisfying the hypotheses of Corollary \ref{finitethmstar}.
	\label{bcor}
\end{corollary}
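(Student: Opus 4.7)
The plan is to observe that this corollary is essentially a packaging of Proposition \ref{solvprop} with Corollary \ref{finitethmstar}, so the proof should simply chain these results together after handling the one-intermediate case separately.

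First I would dispose of the single-intermediate case directly. With $|\mathcal Q| = 1$, the ideal $I_{\mathcal Q}$ is generated (in $\Bbbk[x_q]$) by a single univariate rate law polynomial $\Phi_q$, which, by the at-most-bimolecular hypothesis, has degree at most $2$. Hence $V^a(I_{\mathcal Q})$ has at most two elements, and any degree-2 polynomial has Galois group embedding in $S_2$, which is solvable. One should also check that $\Phi_q$ is not identically zero; if it were, the choice of intermediate would be degenerate (no actual equation to solve), so this case is trivially handled or excluded.

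Next, for the two-intermediate case, the strategy is to verify that the hypotheses of Corollary \ref{finitethmstar} automatically imply the hypotheses of Proposition \ref{solvprop}. The bimolecular assumption is common to both, and both require $|\mathcal Q| = 2$. The remaining hypothesis of Proposition \ref{solvprop} is that $V^a(I_{\mathcal Q})$ is finite, which is precisely the conclusion of Corollary \ref{finitethmstar}. So I would invoke Corollary \ref{finitethmstar} to obtain finiteness of $V^a(I_{\mathcal Q})$, and then apply Proposition \ref{solvprop} to conclude that reduction by QSSA is possible.

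There is no real obstacle here; the work has all been done in the preceding results. The only subtlety worth mentioning is to be explicit that the hypotheses of Corollary \ref{finitethmstar} (namely, those of Theorem \ref{finitethm} plus a nonzero constant term in some $\Phi_q$) do force the rate law polynomials to be nonconstant and not both univariate in the same variable, matching exactly what Proposition \ref{solvprop} needs through its reliance on finiteness. Thus the proof can be written in just a few lines: state the single-intermediate argument, then cite the two corollaries in sequence.
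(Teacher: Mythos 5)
Your proof is correct and follows essentially the same route as the paper: the single-intermediate case is handled directly via the degree bound, and the two-intermediate case chains Corollary \ref{finitethmstar} (finiteness) into Proposition \ref{solvprop} (solvability). The paper's own proof is even terser, leaving the appeal to Proposition \ref{solvprop} implicit, so your version is if anything slightly more explicit about where the solvability conclusion comes from.
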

\begin{proof}
	For one intermediate, applying the QSSA yields a quadratic or linear univariate equation, which certainly has finitely many solutions, all of which are solvable. For two intermediates, we can apply Corollary \ref{finitethmstar} to see that $V^a(I_{\mathcal Q})$ is finite, with all solutions solvable by radicals.
\end{proof}

Note that the hypothesis of at-most-bimolecular kinetics is not a significant limitation of these results. This hypothesis is necessary to restrict the degrees of the rate law polynomials, so the results still hold in the case that the restriction of the CRN to $\mathcal Q$ is at-most-bimolecular. This is almost always the case. In chemistry, the highest degree that can arise in an elementary mechanism is three, and such reactions are rare, and very slow when they do occur. If such reactions are not crucial to the formation of the final product, they can often be neglected. Even when trimolecular reactions are significant, they will almost certainly cause an accumulation of their reactants. Heuristically, none of those reactants would be good candidates for elimination by QSSA, and so the corresponding degree-three monomial would not be appear in a generator of $I_{\mathcal Q}$.

\subsection{Graph-Theoretical Criteria for Solvability}
Thus far, the criteria given to ensure that reduction by QSSA is possible only apply when there are at most two intermediates. We will now extend these results to larger networks by giving certain structural criteria on the OSR graph.

Recall our previous definition of boundary steady states:
\begin{definition}
	A (quasi) steady-state is a \textbf{boundary} (quasi) steady-state if at least one of its coordinates is zero.
\end{definition}

We might similarly require that it be possible to experimentally achieve the steady states:
\begin{definition}
	A (quasi) steady-state is \textbf{achievable} or \textbf{physically meaningful} if the all of its coordinates are real and nonnegative.
\end{definition}

Our later results will show that reduction by QSSA is possible for the nonboundary, achievable steady states when their OSR graphs are treelike, in a way that will be made more precise later.

\begin{definition}
	Let $G$ be the oriented species-reaction graph corresponding to a CRN $(\mathcal S, \mathcal C, \mathcal R)$. For any subset $\mathcal Q \subseteq \mathcal S$, let $G(\mathcal Q)$ be the induced subgraph of $G$ given by deleting the species vertices not in $\mathcal Q$ (i.e. removing slow species). This is the \textbf{restriction of the OSR graph to $\mathcal Q$}, or, in the context of QSSA, the \textbf{QSSA OSR (QOSR)} graph.
\end{definition}

Within the QOSR graph, we would like to group together species that interact with each other. Such groupings reflect the distribution of variables among the rate law polynomials. This gives rise to a certain equivalence relation. We first recall two pieces of notation:

\begin{nota}
	Given a set $V$ and an equivalence relation $\sim$ on it, for any $v \in V$, we use $[v]$ to denote the equivalence class of $v$. To avoid confusion, this section avoids the notation $[A]$ for the concentration of species A. We use $V/\rel$ to denote the set of equivalence classes of $V$.
\end{nota}

\begin{nota}
	For a graph $G=(V,E)$ and an equivalence relation $\sim$ on $V$, we define $G/\rel$ to be the following graph:
\begin{enumerate}[leftmargin=2cm]
	\item The vertex set is $V/\rel$.
	\item The edge set is $\{([x],[y])\ |\ (x,y) \in E\}$.
\end{enumerate}
\end{nota}

In many cases it may be desirable to suppress the loops created by this identification. However, the equivalence relations we describe below will never cause loops to be formed.

\begin{definition}
	If $\sim$ is an equivalence relation on the vertices of a QOSR graph $G(\mathcal Q)$, we say that it is \textbf{$\mathcal Q$-compatible} when it satisfies the following conditions:
	\begin{enumerate}[leftmargin=2cm]
		\item For all $s\in\mathcal Q$ and $r\in\mathcal R$, $s\not\sim r$.
		\item For all $s,s' \in\mathcal S$ and $r,r'\in\mathcal R$: if $s\rightarrow r$ and $s' \rightarrow r'$ are edges, then $s \sim s'$ if and only if $r \sim r'$.
	\end{enumerate}
\end{definition}

The first condition ensures that species and reactions remain separate when passing from $G$ to $G/\rel$. This means that $\sim$ restricted to $\mathcal Q$ remains an equivalence relation. The second ensures that each equivalence class is closed with respect to species which react together; e.g., if $A+B \rightarrow C$ is in the CRN, then we are guaranteed that $A \sim B$. The converse does not necessarily hold, as it could be the case that $A$ and $B$ do not react, but there are reactions $r,r'$ such that $A\rightarrow r$ and $B\rightarrow r'$ are in the QOSR graph, and $r\sim r'$.

The motivation for the following theorem is as follows: if the QOSR graph is approximately treelike, we can start at a root of the tree, solve for the associated concentrations, and then substitute those values into the remaining equations. Substituting solvable roots into solvable roots should result in another solvable root.

\begin{theorem}
	Let $G$ be the OSR graph of a CRN, $\mathcal Q$ a set of intermediates such that $LCL(\mathcal R,\mathcal Q)$ is empty, and $H=G(\mathcal Q)$ the associated QOSR graph. Suppose there exists a $\mathcal Q$-compatible relation on $H$ such that
	\begin{enumerate}[leftmargin=2cm]
		\item $H/\rel$ has no directed cycles,
		\item For every equivalence class of species $[q] \in \mathcal Q/\rel$ (i.e. a ``species'' vertex in $H/\rel$), reduction of the original CRN by QSSA is possible when $[q]$ is the set of intermediates,
		\item For each equivalence class of species $[q]$, specializing the constant terms of the rate law polynomials among the generators of $I_{[q]}$ to values algebraic over $\Bbbk_{\mathcal Q - [q]}$ does not make $V^a(I_{[q]})$ infinite.
	\end{enumerate}
	Then reduction by QSSA is possible for $\mathcal Q$.
	\label{treethm}
\end{theorem}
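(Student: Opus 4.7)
The plan is to induct on a topological ordering of the equivalence classes of $H/\rel$ (which exists because $H/\rel$ is a DAG by hypothesis (1)), using hypothesis (2) to perform local QSSA on each class and the specialization lemma to propagate solvability as we substitute the solved values into later classes. Let the equivalence classes of species be enumerated as $[q_1],[q_2],\ldots,[q_n]$ in topological order.

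The key structural observation I would establish first is that, for any $q\in[q_i]$, the variables appearing in the rate law polynomial $\Phi_q$ are all either non-intermediates or intermediates belonging to $[q_j]$ for some $j\le i$. Indeed, a variable $x_s$ appears in $\Phi_q$ only when $s$ is a reactant of some reaction $r$ in which $q$ is consumed or produced. If $q$ is consumed in $r$ then both $s\to r$ and $q\to r$ are edges of $H$, so $\mathcal Q$-compatibility forces $s\sim q$, i.e.\ $s\in[q_i]$. If $q$ is produced in $r$ then $s\to r$ and $r\to q$ are edges, so $[s]\to[r]\to[q_i]$ is a directed path in $H/\rel$, forcing $[s]$ to precede $[q_i]$ in the topological order. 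A parallel check shows that every rate constant appearing in generators of $I_{[q_i]}$ is among those belonging to $\Bbbk_{[q_i]}$, so no new rate constants appear beyond those allowed by the local problem.

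Now induct on $i$. At stage $i=1$, the class $[q_1]$ is a source, so by the structural observation the generators of $I_{[q_1]}$ live in $\Bbbk_{[q_1]}[x_q : q\in[q_1]]$ and involve only the allowed rate constants; hypothesis (2) then gives a finite $V^a(I_{[q_1]})$ with every coordinate solvable over $\Bbbk_{[q_1]}\subseteq\Bbbk_{\mathcal Q}$. Call the corresponding solvable Galois extension $L_1/\Bbbk_{\mathcal Q}$. For the inductive step, assume we have built a solvable extension $L_i/\Bbbk_{\mathcal Q}$ in which each intermediate in $[q_1]\cup\cdots\cup[q_i]$ has been expressed by finitely many values. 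Consider $I_{[q_{i+1}]}$: by the structural observation, its generators are polynomials in $x_q$ for $q\in[q_{i+1}]$ whose coefficients are polynomials in the rate constants, the non-intermediate concentrations, and the intermediate concentrations from $[q_1]\cup\cdots\cup[q_i]$ (which, in $\Bbbk_{[q_{i+1}]}$, appear as formal parameters). Substituting the already-computed values from $L_i$ for those parameters is exactly the specialization described in Lemma~\ref{speclemma}, with the role of the algebraically-independent $x$'s played by the concentrations of intermediates from earlier classes and the role of the $y$'s played by the remaining elements generating $\Bbbk_{[q_{i+1}]}$. Hypothesis (3) guarantees that after this specialization $V^a(I_{[q_{i+1}]})$ remains finite, and Lemma~\ref{speclemma} guarantees that the solvability provided by hypothesis (2) survives the substitution. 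Adjoining the resulting roots to $L_i$ yields a solvable extension $L_{i+1}/L_i$, and since towers of solvable extensions are solvable we obtain a solvable $L_{i+1}/\Bbbk_{\mathcal Q}$. After $n$ steps, every coordinate of every point of $V^a(I_{\mathcal Q})$ lies in $L_n$, and finiteness of $V^a(I_{\mathcal Q})$ follows from finiteness at each stage by a standard argument (each choice of earlier coordinates leads to finitely many completions), so reduction by QSSA succeeds on $\mathcal Q$.

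The principal difficulty I anticipate is the clean invocation of Lemma~\ref{speclemma} at the inductive step. The lemma is stated in terms of specializing a tuple of algebraically independent transcendentals, whereas what we actually need to specialize are parameters of $\Bbbk_{[q_{i+1}]}$ that stand for intermediate concentrations from earlier classes; in $\Bbbk_{\mathcal Q}$ these are true variables, but after induction they have been replaced by elements algebraic (in radicals) over the partially specialized ground field. Making this bookkeeping precise --- identifying which rate constants and constant terms are generic from the viewpoint of $\Bbbk_{[q_{i+1}]}$, checking that the substitution is exactly the specialization of a generic parameter to an algebraic value, and confirming that hypothesis (3) is strong enough to exclude the pathology of Gr\"obner basis elements degenerating to zero --- is the technical heart of the argument. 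Once this is in place, the rest is assembly of the structural observation, Lemma~\ref{speclemma}, and standard facts about towers of solvable extensions.
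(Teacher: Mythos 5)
Your proposal is correct and follows essentially the same route as the paper: an induction along the acyclic structure of $H/\rel$ in which each class is solved locally via hypothesis (2), the computed concentrations are substituted into the next class as a specialization handled by Lemma~\ref{speclemma}, finiteness is preserved by hypothesis (3), and solvability is assembled through towers of solvable extensions. The only cosmetic difference is that the paper peels off a sink class and applies the inductive hypothesis to $\mathcal Q - [q]$, whereas you build up from sources along a topological order; the underlying argument, including the structural observation that $\mathcal Q$-compatibility plus acyclicity keeps later classes' concentrations out of earlier classes' generators, is the same.
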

\begin{proof}
	We proceed by induction on the number of equivalence classes of $\mathcal Q$. If there is just one equivalence class, then it is all of $\mathcal Q$, and so QSSA reduction is possible.
	
	Since $H/\rel$ has no directed cycles, pick some equivalence class $[q]\in\mathcal Q/\rel$ such that there is no directed path from $[q]$ to any other member of $\mathcal Q/\rel$ in $H/\rel$.
	
	By hypothesis, QSSA reduction is possible for $I_{\mathcal Q - [q]}$. In fact, this reduction is possible over the field $\Bbbk_{\mathcal Q}$ (a subfield of $\Bbbk_{\mathcal Q - [q]}$). This is because $\mathcal Q$-compatibility ensures the concentrations corresponding to members of $[q]$ do not appear as coefficients of the generators of $I_{\mathcal Q - [q]}$, which means they do not appear when computing a Gr\"obner basis for $I_{\mathcal Q - [q]}$. This means the concentrations in any quasi-state-state for $\mathcal Q - [q]$ are also solvable over $\Bbbk_{\mathcal Q}$.
	
	Let $(\alpha_1,...,\alpha_m)$ be such a zero of $I_{\mathcal Q - [q]}$; we can view it as a ``partial solution'', in that it will be extended to a zero of $I_{\mathcal Q}$. This is similar to the Extension Theorem~\cite{iva}, but we need to be careful to ensure that solvability is preserved.
	
	We begin by following the algorithm outlined in Proposition \ref{algprop}. First, a Gr\"obner basis of $I_{[q]}$ (in the ring $\Bbbk_{[q]}[x_r\ |\ r\in [q]]$) is computed with respect to some lexicographic ordering. This provides univariate polynomials $f_r(x_r)$ in each of the variables $x_r$, which are solvable by the second hypothesis. Some of the parameters in $\Bbbk_{[q]}$ are concentrations of species in $\mathcal Q - [q]$. Specialize these to the values of the partial solution above. By Lemma \ref{speclemma}, this preserves the solvability of the polynomials. Furthermore, this specialization preserves finiteness, as required by hypothesis three.
	
	As such, we obtain a collection of polynomials $f'_r(x_r)$ after the substitution. For any zero $(\beta_1,...,\beta_n)$ of the specialized $I_{[q]}$, each coordinate vanishes on the corresponding $f'_r(x_r)$, which means that it is the root of a solvable polynomial over $\Bbbk(\alpha_1,...,\alpha_m)$. 
	
	Finally, we know that $(\alpha_1,...,\alpha_m,\beta_1,...,\beta_n$) is a zero of $I_{\mathcal Q}$. Several of the generators of $I_{\mathcal Q}$ coincide with generators of $I_{\mathcal Q - [q]}$, so substitution of the $\alpha_i$'s eliminates them. Because there are no LCLs, remaining are generators of $I_{[q]}$, and by construction these will vanish on the $\beta_j$'s after substitution of the $\alpha_i$'s into their constant terms.
	
	These zeros are solvable in each coordinate, and we only gain finitely many more in extending from $I_{\mathcal Q - [q]}$ to $I_{\mathcal Q}$. Any zero of $I_{\mathcal Q}$ can be obtained in this way as well; call such a zero $(\alpha_1,...,\alpha_m,\beta_1,...,\beta_n)$ where the $\alpha$ coordinates correspond to concentrations from $\mathcal Q - [q]$ and the $\beta$ coordinates to those from $[q]$. The generators of $I_{\mathcal Q - [q]}$ are also generators of $I_{\mathcal Q}$, so they must vanish on $(\alpha_1,...,\alpha_m,\beta_1,...,\beta_n)$ and hence on $(\alpha_1,...,\alpha_m)$ alone (as they do not have variables into which the $\beta$s could be substituted). Likewise, generators of $I_{[q]}$ are also among the generators of $I_{\mathcal Q - [q]}$, and must vanish on $(\beta_1,...,\beta_m)$ after the substitution (``specialization'') of the other concentrations to the $\alpha_i$s.
	
	In summary, we can extend any (solvable) zero of $I_{\mathcal Q - [q]}$ to a (solvable) zero of $I_{\mathcal Q}$ by specializing $I_{\mathcal [q]}$. This produces only finitely many zeros. Any zero of $I_{\mathcal Q}$ can be obtained in this way, and so $I_{\mathcal Q}$ has only finitely many zeros, all of which are solvable over $\Bbbk$, which means reduction by QSSA is possible.
\end{proof}

The hypotheses of Theorem \ref{treethm} exclude any choice of intermediates $\mathcal Q$ for which there exists some linear conservation law. In such cases, it is possible for the species constrained by an LCL to be spread across several equivalence classes. The theorem fails in such cases because the argument that no concentration from $[q]$ appears in the generators of $I_{\mathcal Q - [q]}$ may be false.

Note that Theorem \ref{treethm} remains true if using weaker definitions of ``QSSA reduction is possible'', such as restricting to some combination of nonboundary, nonzero, nonnegative, or real-valued concentrations. However, this requires the additional step of removing spurious solutions, such as those with negative concentrations, at each step, and so it is necessary to verify that there is at least one solution in such cases.

At first glance, the conditions of Theorem \ref{treethm} seem difficult to satisfy, particularly the ``niceness'' condition of specialization required by the third hypothesis. Nevertheless, we will now show that, when we restrict to nonboundary quasi-state-states, Theorem \ref{treethm} will apply if each equivalence class satisfies Theorem \ref{finitethm} and has at least one zero. The last condition follows naturally from the motivation for Theorem \ref{treethm}: in order to make a substitution, we need something in which to make a substitution.

\begin{corollary}
	For some CRN, let $\mathcal Q$ be a set of intermediates and $G$ its OSR graph. If there exists a $\mathcal Q$-compatible relation on $G(\mathcal Q)$ for which $[q]\in\mathcal Q/\rel$ satisfies Theorem \ref{finitethm}, and for each equivalence class $[q]$, $I_{[q]}$ has at least one nonboundary zero, then there are finitely many physically meaningful nonboundary quasi-steady-states, all expressible by radicals.
	\label{osrcor}
\end{corollary}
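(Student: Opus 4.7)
The plan is to apply Theorem \ref{treethm} in the weakened form described in the remark following it, so that ``QSSA reduction is possible'' means: the nonboundary zeros of $I_{\mathcal Q}$ form a finite set and each is solvable over $\Bbbk$. The corollary's conclusion about physically meaningful nonboundary quasi-steady-states is then immediate, since these are a subset of the nonboundary zeros in $V^a(I_{\mathcal Q})$ (the restriction to real, nonnegative coordinates is an a posteriori filter on the finite, solvable family we construct).

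First I would check the three hypotheses of Theorem \ref{treethm}. The $\mathcal Q$-compatible relation is given, and acyclicity of $H/\sim$, needed to topologically order the equivalence classes, I would take as an implicit ingredient of the setup (it can always be arranged by enlarging equivalence classes). The second hypothesis --- that reduction by QSSA is possible on each class $[q]$ alone, in the weakened nonboundary sense --- is exactly the content of Theorem \ref{finitethm} applied to $[q]$: it yields finitely many solvable zeros of $(I_{[q]} : (\prod_{r\in[q]} x_r)^\infty)$, and the assumption that $I_{[q]}$ has a nonboundary zero ensures the inductive chain of substitutions is never empty.

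The third hypothesis, that specializing the constant terms of the rate law polynomials in $I_{[q]}$ to values algebraic over $\Bbbk_{\mathcal Q-[q]}$ leaves $V^a(I_{[q]})$ finite, is the main obstacle. To discharge it, I would revisit the case analysis in the proof of Theorem \ref{finitethm}. In every subcase, the finiteness of $V^a((I_{[q]}:(xy)^\infty))$ is witnessed by one or two explicit polynomials extracted from $\Phi_x,\Phi_y$, and the algebraic independence used to construct those witnesses is between rate constants and nonintermediate concentrations internal to $[q]$. The parameters being specialized --- concentrations of species in $\mathcal Q-[q]$ --- enter coefficients of $\Phi_{[q]}$ only as multiplicative factors on rate constants of reactions feeding $[q]$ from outside, and can be absorbed into those rate constants by the substitution trick used in the proof of Corollary \ref{indcor}. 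Hence the relevant algebraic independence survives, and each case-by-case witness continues to exhibit finiteness after specialization.

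Finally, I would run the induction on $|\mathcal Q/\sim|$ just as in the proof of Theorem \ref{treethm}: pick a sink $[q]$ of $H/\sim$; invoke the inductive hypothesis on $\mathcal Q-[q]$ for a finite family of solvable nonboundary partial solutions; substitute each into $I_{[q]}$ and appeal to Lemma \ref{speclemma} together with the specialization finiteness verified above to produce finitely many solvable nonboundary extensions; then combine. The resulting finite family of solvable nonboundary zeros of $I_{\mathcal Q}$ contains all physically meaningful nonboundary quasi-steady-states, which finishes the proof.
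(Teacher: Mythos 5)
Your overall strategy --- feed Theorem \ref{treethm} (in its weakened, nonboundary form) with Theorem \ref{finitethm}, and discharge the third hypothesis by re-running the case analysis of Theorem \ref{finitethm} after the constant terms have been specialized --- matches the paper's. But there is a genuine gap in how you re-run that case analysis. Several branches of the proof of Theorem \ref{finitethm} do not terminate in an explicit finiteness witness; they terminate in ``the two rate law polynomials are proportional, hence there is an LCL, see Case~1,'' and Case~1 only yields finiteness because a genuine LCL contributes an \emph{additional linear generator} to the ideal. After the partial solution is substituted into the constant terms, $\Phi_x'$ and $\Phi_y'$ can become proportional over $\Q$ even though no LCL of the original network exists; in that branch the substituted ideal is generated by a single polynomial, $V^a$ is a curve, and your claim that ``each case-by-case witness continues to exhibit finiteness'' fails. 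Ruling this out is essentially the entire content of the paper's proof: a dependence $\Phi_x' + \alpha\Phi_y' = 0$ with $\alpha\in\Q$ forces a $\Q$-dependence among the \emph{nonconstant} coefficients, which lifts to the unsubstituted polynomials because no substitution touched those coefficients; hence $\Phi_x - \alpha\Phi_y$ is a constant, and it is either zero (a genuine LCL, handled by Case~1) or a nonzero constant lying in $I_{[q]}$, contradicting the hypothesis that $I_{[q]}$ has a zero. Note this is the essential place where the ``at least one nonboundary zero'' hypothesis enters --- not merely, as in your write-up, to keep the inductive chain of substitutions nonempty.

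A smaller imprecision: the specialized values are not fresh transcendentals but elements algebraic over $\Bbbk_{\mathcal Q - [q]}$, so they cannot be ``absorbed into the rate constants'' via the change of variables of Corollary \ref{indcor} --- that trick requires the absorbed factors to be independent transcendentals so that the new parameters $\tilde k_i$ remain algebraically independent. What actually preserves the independence arguments is that $\mathcal Q$-compatibility, together with choosing $[q]$ as a sink of $H/\rel$, guarantees that the rate constants occurring in the nonconstant coefficients of the $[q]$ rate law polynomials are disjoint from those on which the partial solution depends, so the former remain transcendental over the field generated by the substituted constant terms. Your conclusion there is right, but for a different reason than the one you give.
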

\begin{proof}
	The substitutions made in Theorem \ref{treethm} preserve the independence relationships among the coefficients of the rate law polynomials, and hence the reasoning with algebraic independence made in Theorem \ref{finitethm} continues to apply. If we can provide a suitable interpretation for what it means to have a linear conservation law after substitution, we will be done.
	
	Let $\Phi_x',\Phi_y'$ be a pair of rate law polynomials with a substitution made in the constant term. A linear conservation law is a dependence of the form
	$$\Phi_x' + \alpha \Phi_y' = 0,$$
	
	where $\alpha \in \Q$. This implies that there is also an LCL among their nonconstant terms.	Since no substitution was made into those coefficients, this lifts back to the unsubstituted rate law polynomials. As such, either $\Phi_x - \alpha\Phi_y = 0$, implying the existence of the necessary LCL in $I_{\mathcal Q}$, or $\Phi_x - \alpha\Phi_y$ is a nonzero constant, contradicting the hypothesis that $I_{\mathcal Q}$ has at least one solution.
\end{proof}

As we will demonstrate below in Example 35, the restriction to nonboundary steady states in Corollary \ref{osrcor} is necessary.

%%%%%%%%%%%%%%%%%%%%%%%%%%%%%%%%%%%%%%%%%%%%%%%%%%%%%%%%%%%%%%%%%%%%%%%%%%%%%%%%%%%%%%%%%

\section{Examples}
First, we will discuss the CRN presented by Pantea, Gupta, Rawlings, and Craciun in the context of what we have proven. Following this, we will provide some examples of CRNs with interesting properties. These examples involve reducible univariate polynomials and Galois groups such as $S_4\times \Z_2$, neither of which we would typically expect from polynomials with ``generic-looking'' coefficients. We conclude with an example of a mechanism which demonstrates the necessity of the restriction to nonboundary steady states in Corollary \ref{osrcor}.

For several examples, Maple code is available which demonstrates the computation. We have implemented a version of the algorithm outlined in Proposition \ref{algprop}, which computes the Galois groups associated to a mechanism, given the QSSA ideal. Several examples also work through the algorithm manually to help clarify the details of its use.

\begin{example}
\label{exorig}
	This is the original insolvable mechanism~\cite{pantea}:
	
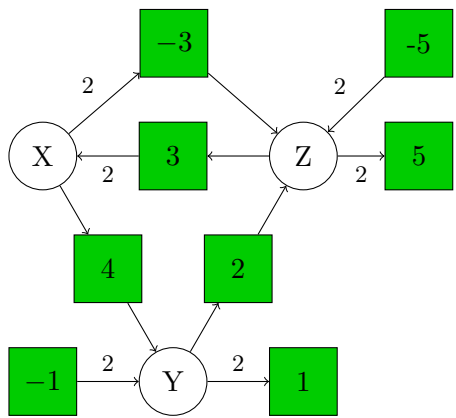
\begin{figure}
	\begin{center}
	\begin{tikzpicture}
		\node[circle,draw,minimum size=.9cm] at (0,3) (X) {X};
		\node[circle,draw,minimum size=.9cm] at (1.732,0) (Y) {Y};
		\node[circle,draw,minimum size=.9cm] at (3.464,3) (Z) {Z};
		
		\node[rectangle,draw,minimum size=.9cm,fill=rxngreen] at (0,0) (r-1) {$-1$};
		\node[rectangle,draw,minimum size=.9cm,fill=rxngreen] at (3.464,0) (r1) {1};
		\node[rectangle,draw,minimum size=.9cm,fill=rxngreen] at (2.598,1.5) (r2) {2};
		\node[rectangle,draw,minimum size=.9cm,fill=rxngreen] at (1.732,3) (r3) {3};
		\node[rectangle,draw,minimum size=.9cm,fill=rxngreen] at (1.743,4.5) (r-3) {$-3$};
		\node[rectangle,draw,minimum size=.9cm,fill=rxngreen] at (0.866,1.5) (r4) {4};
		\node[rectangle,draw,minimum size=.9cm,fill=rxngreen] at (5,3) (r5) {5};
		\node[rectangle,draw,minimum size=.9cm,fill=rxngreen] at (5,4.5) (r-5) {-5};
		
		\draw[->] (X) -- (r4);
		\draw[->] (X) -- node[above left] {\footnotesize 2} (r-3);
		
		\draw[->] (Y) -- node[above]  {\footnotesize 2} (r1);
		\draw[->] (Y) -- (r2);
		
		\draw[->] (Z) -- node[below]  {\footnotesize 2} (r5);
		\draw[->] (Z) -- (r3);
		
		\draw[->] (r-1) -- node[above]  {\footnotesize 2} (Y);
		\draw[->] (r2) -- (Z);
		\draw[->] (r3) -- node[below]  {\footnotesize 2} (X);
		\draw[->] (r-3) -- (Z);
		\draw[->] (r4) -- (Y);
		
		\draw[->] (r-5) -- node[above left] {\footnotesize 2} (Z);
	\end{tikzpicture}
	\end{center}
\caption{\label{fig:pantea}QOSR graph for the mechanism in Example \ref{exorig}, taken from Pantea et. al~\cite{pantea}.}
\end{figure}

\begin{align*}
	\ce{2Y &<=>[k_1][k_{-1}]2B}\\
	\ce{Y + B &->[k_2] Z + A}\\
	\ce{Z + B &<=>[k_3][k_{-3}] 2X}\\
	\ce{A + X &->[k_4] Y + B}\\
	\ce{2Z &<=>[k_5][k_{-5}] 2A},
\end{align*}
for which the chosen intermediates are $\mathcal Q = \{X,Y,Z\}$. The QOSR graph is displayed in Figure \ref{fig:pantea}

The CRN is at-most-bimolecular, and there are 3 intermediates. The univariate polynomial in $z = [Z]$ obtained by computing a Gr\"obner basis with respect to
$$x > y > z,$$
is degree $8$, demonstrating the sharpness of the bound used in the proof of Proposition \ref{solvprop}. The corresponding Galois group is isomorphic to $S_8$, which is insolvable. Corollary \ref{bcor} shows that this example has the minimum number of intermediates necessary to have an insolvable Galois group, as for two intermediates we can almost always guarantee solvability.

Furthermore, five total species is chemically reasonable when there are three intermediates, as we expect at least one ``overall reactant'' and one ``overall product'', neither of which is an intermediate. Additionally, we can see that the QOSR graph in Figure \ref{fig:pantea} has a directed cycle among $X$, $Y$, and $Z$. The only way to remove a cycle with a $\mathcal Q$-compatible equivalence relation is to place all its members in one equivalence class, which prevents the application of Corollary \ref{osrcor}.
\end{example}

\begin{example}
\label{panteatweakex}
We can break the cycle from Example \ref{exorig} by deleting the edge $(2,Z)$. This is the same as changing the reaction

\begin{figure}
	\begin{center}
	\begin{tikzpicture}

		\draw[rectangle, fill=teal, opacity =0.25] (-0.65,2.35) -- (5.65,2.35) -- (5.65,5.15) -- (-0.65,5.15) -- (-0.65,2.35);
		
		\draw[rectangle,fill=orange, opacity=0.25] (-0.65,-0.65) -- (4.114,-0.65) -- (4.114, 2.15) -- (-0.65,2.15) -- (-0.65,-0.65);

		\node[circle,draw,minimum size=.9cm] at (0,3) (X) {X};
		\node[circle,draw,minimum size=.9cm] at (1.732,0) (Y) {Y};
		\node[circle,draw,minimum size=.9cm] at (3.464,3) (Z) {Z};
		
		\node[rectangle,draw,minimum size=.9cm,fill=rxngreen] at (0,0) (r-1) {$-1$};
		\node[rectangle,draw,minimum size=.9cm,fill=rxngreen] at (3.464,0) (r1) {1};
		\node[rectangle,draw,minimum size=.9cm,fill=rxngreen] at (2.598,1.5) (r2) {2};
		\node[rectangle,draw,minimum size=.9cm,fill=rxngreen] at (1.732,3) (r3) {3};
		\node[rectangle,draw,minimum size=.9cm,fill=rxngreen] at (1.743,4.5) (r-3) {$-3$};
		\node[rectangle,draw,minimum size=.9cm,fill=rxngreen] at (0.866,1.5) (r4) {4};
		\node[rectangle,draw,minimum size=.9cm,fill=rxngreen] at (5,3) (r5) {5};
		
		\draw[->] (X) -- (r4);
		\draw[->] (X) -- node[above left] {\footnotesize 2} (r-3);
		
		\draw[->] (Y) -- node[above]  {\footnotesize 2} (r1);
		\draw[->] (Y) -- (r2);
		
		\draw[->] (Z) -- node[below]  {\footnotesize 2} (r5);
		\draw[->] (Z) -- (r3);
		
		\draw[->] (r-1) -- node[above]  {\footnotesize 2} (Y);
		\draw[->,dashed,thick,color=red] (r2) -- (Z);
		\draw[->] (r3) -- node[below]  {\footnotesize 2} (X);
		\draw[->] (r-3) -- (Z);
		\draw[->] (r4) -- (Y);
	\end{tikzpicture}
	\end{center}
\caption{\label{fig:saturation}QOSR graph for the modified Pantea mechanism of Example \ref{panteatweakex}. The deleted edge is dashed and marked in red, and regions are drawn around the graph to denote equivalence classes used in applying Theorem \ref{treethm}.}
\end{figure}
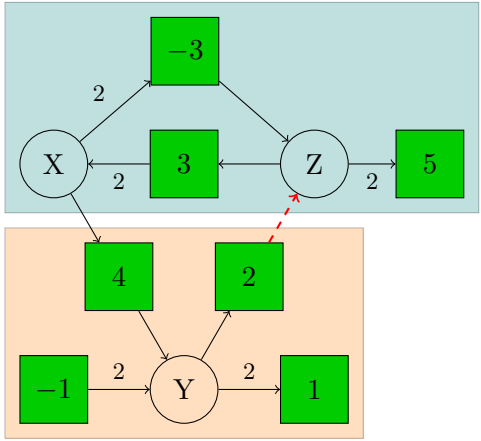

\begin{center}
	\ce{Y + B ->[k_2] Z + A}\ \ into\ \ \ce{Y + B ->[k_2]  A}.
\end{center}
To simplify the calculations and illustrate an interesting point about boundary solutions, we also remove the backwards reaction, labeled by $-5$. We will continue to use $\mathcal Q = \{X,Y,Z\}$.

Let $x$, $y$, and $z$ be the concentrations of $X$, $Y$, and $Z$, respectively, and likewise, $a$ and $b$ the concentrations of $A$ and $B$. The rate equations for the intermediates are:
\begin{align*}
	\frac{dx}{dt} &= -2k_{-3}x^2 - k_4ax + 2k_3bz\\
	\frac{dy}{dt} &= -2k_{1}y^2 - k_2by + 2k_{-1}b^2 + k_4ax\\
	\frac{dz}{dt} &= -2k_5z^2 - k_3bz + k_{-3}x^2 %+ 2k_{-5}a^2
\end{align*}

The QOSR graph of the resulting CRN is displayed in Figure \ref{fig:saturation}. We would like to show that reduction by QSSA is possible. Since there are three intermediates, this will require Corollary \ref{osrcor}. 

First, we need a $\mathcal Q$-compatible equivalence relation. These are indicated by the shaded regions in Figure \ref{fig:saturation}. The equivalence classes of reactants are:
$$\{X,Z\}\ \textrm{and}\ \{Y\},$$

which induces the following equivalence classes of reactions:
$$\{-3,3,5\},\ \{4\},\ \{-1\},\textrm{ and }\ \{2,1\}.$$

The hypotheses of Corollary \ref{osrcor} are satisfied: both equivalence classes have fewer than two intermediates, the kinetics are at-most-bimolecular, and the rate law polynomials are of the appropriate form. That each equivalence class has at least one quasi-steady-state is less immediate, but is readily verifiable by a computer algebra package. This ensures that reduction by QSSA is at least possible on the nonboundary solutions.

However, even for the boundary solutions, we can verify that reduction by QSSA is possible. By inspection, the only boundary steady state occurs when $x=z=0$. This determines $y$ as one of the two roots of a quadratic equations. In combination with the above, this ensures that reduction by QSSA is always possible with $\mathcal Q$.

Knowing this, we may as well work with $I_{\mathcal Q}$. This gives rise to more interesting Galois groups than might be expected.

There are no LCLs among the intermediates, so the constants are just $a$, $b$, and the rate constants, hence:
$$\Bbbk_{\mathcal Q} = \Q(a,b,k_1,k_{-1},k_2,k_3,k_{-3},k_4,k_5)\ \ \ \textrm{and}\ \ \ A_{\mathcal Q} = \Bbbk_{\mathcal Q} [x,y,z],$$
with the following QSSA ideal:
$$I_{\mathcal Q} = \langle -2k_{-3}x^2 - k_4ax + 2k_3bz, -2k_{1}y^2 - k_2by +2k_{-1}b^2 + k_4ax, -2k_5z^2 - k_3bz + k_{-3}x^2 + 2k_{-5}a^2\rangle.$$

The corresponding Galois groups of the nonboundary quasi-steady-states solvable by the previous remarks, and we can follow the algorithm outlined in Proposition \ref{algprop} to calculate them. The Galois groups (over $\Bbbk$) for the irreducible factors of the univariate polynomials in each coordinate are as follows:
\begin{center}
	\begin{tabular}{c|c}
	var. & Galois group\\\hline\hline
	$x$ & $S_3$ or $\{e\}$\\\hline
	$y$ & $S_4\times \Z_2$ or $\Z_2$\\\hline
	$z$ & $S_3$ or $\{e\}$
\end{tabular}
\end{center}

There are two Galois groups in each case because the polynomials obtained from the Gr\"obner bases factor into the product of two irreducibles.

Recall that we found that reduction by QSSA is possible even on the boundary by showing that on the boundary, $x=z=0$ and the possible concentrations of $y$ were roots of a quadratic. Inspecting the above, this is consistent with the appearance of $\Z_2$ and $\{e\}$ as Galois groups.

To verify our predictions, we can check that those Galois are lost when we work in $(I_{\mathcal Q} : (xyz)^\infty)$. First, the saturation needs to be computed:
$$I_{\mathcal Q}' = (I_{\mathcal Q} : \langle xyz\rangle^{\infty})$$
\begin{align*}
	I'_{\mathcal Q} =& \langle ak_4x+4k_5z^2, -ak_4x+2bk_3z-2k_{-3}x^2,  abk_3k_4+2ak_4k_5z+4k_{-3}k_5xz,\\
	&  a^2k_4^2k_5x+ab^2k_3^2k_4+4ak_{-3}k_4k_5x^2+4k_{-3}^2k_5x^3, -ak_4x-2b^2k_{-1}+bk_2y+2k_1y^2\rangle
\end{align*}

While the generating set is now much more complicated, the boundary solutions have been eliminated. When we check the Galois groups again for $I'_{\mathcal Q}$, only $S_3$ and $S_4\times \Z_2$ remain. As remarked previously, using a boundary solution to perform the reduction step is equivalent to removing a species entirely, which is generally not desirable. After saturation, only the Galois groups and roots which are most relevant to performing the reduction remain.
\end{example}

Finally, we present an example of a mechanism which shows that Corollary \ref{osrcor} cannot be generalized to include boundary quasi-steady-states.

\begin{example}
	Consider the following CRN:
	
\begin{align*}
	\ce{Z &->[k_1] X}\\
	\ce{X &->[k_2] X + Y}\\
	\ce{Y &->[k_3] X + Y}\\
	\ce{Z &->[k_4] X + Y}\\
	\ce{X + Y &->[k_5] C}
\end{align*}
	
	We choose intermediates $\mathcal Q = \{X,Y,Z\}$, with concentrations $x$, $y$, and $z$, respectively. Note that this is already a poor choice of intermediates, as it is clear that it will only be at (quasi) steady state when $z=0$. The rate law polynomials are:
\begin{align*}
	\Phi_x(x,y,z) &= -k_5xy + k_1z + k_4z\\
	\Phi_y(x,y,z) &= -k_5xy + k_2x + k_4z\\
	\Phi_z(x,y,z) &= -k_1z  - k_4z
\end{align*}
	
	We could try to apply Corollary \ref{osrcor}, using a $\mathcal Q$-compatible equivalence relation with the species separated into $\{X,Y\}$ and $\{Z\}$. By inspection, QSSA is possible for each of those equivalence classe. However, the only zero of $I_{\{Z\}}$ is $z=0$, a boundary solution, so the hypotheses of the corollary are not satisfied. 
	
	When we make this substitution into the other rate law polynomials, it is clear that QSSA will not be possible:
\begin{align*}
	\Phi_x(x,y,0) &= -k_5xy\\
	\Phi_y(x,y,0) &= -k_5xy + k_2x
\end{align*}

This system has infinitely many solutions, and so reduction by QSSA is not possible.
\end{example}

%%%%%%%%%%%%%%%%%%%%%%%%%%%%%%%%%%%%%%%%%%%%%%%%%%%%%%%%%%%%%%%%%%%%%%%%%%%%%%%%%%%%%%%%%

\section{Discussion}
This paper provides partial answers to several questions raised in the original paper on the relationship between insolvability and QSSA~\cite{pantea}. Corollary \ref{bcor} shows that both examples of nonsolvable mechanisms presented in that paper are minimal with respect to the number of intermediates (given at-most-bimolecular kinetics). Furthermore, Theorem \ref{treethm} shows that they are also minimal with respect to the structure of the QOSR graph as well: the mechanisms have a cycle among three intermediates. However, the mechanisms are not necessarily minimal with respect to the Galois group, as $A_5$ is the smallest insolvable group, while the mechanisms presented by Pantea, Gupta, Rawlings, and Craciun all had Galois groups isomorphic to $S_8$. In this case, the B\'ezout bound is sharp and the Galois groups of each degree $n$ polynomial is $S_n$. Neither pattern holds in general: we have discovered examples in which the B\'ezout bound is not sharp, and also examples in which it is sharp, but the resulting degree $8$ polynomial has Galois group $S_7$ rather than $S_8$.

Our results also begin to answer the question of why, historically, it was not noticed that reduction by QSSA could fail due to the absence of solutions in radicals. First we saw that reduction by QSSA is typically possible for small number of intermediates. For many mechanisms, the number of intermediates is not too large, especially when the computations are performed by hand. Extending this, we showed that reduction by QSSA is possible for larger CRNs satisfying certain structural constraints. These results cover many well-known mechanisms in chemistry, biology, and chemical engineering, particularly for catalysis and enzyme-substrate interactions~\cite{fogler}.

In fact, our results suggest that finiteness of the number of steady states is in some ways more critical than solvability. For all of our results, we require finiteness as the first step to showing that the solution is expressible in radicals. Its failure to be preserved under specialization makes it difficult to extend solvability among subsets of the intermediates to the entire system. There are several interesting questions that this observation suggests:
\begin{enumerate}[leftmargin=2cm]
	\item Does Theorem \ref{finitethm} generalize to larger sets of intermediates?
	\item How important is excluding boundary solutions to finiteness?
	\item Does there exist some analog of Theorem \ref{treethm} for finiteness?
\end{enumerate}

One promising angle of attack for Question 1 would be to generalize Proposition \ref{indprop} or Corollary \ref{indcor} by replacing divisibility with ideal membership. That is, we would like it to be true that for rate law polynomials $f,g_1,...,g_n$, the relationship $f \in \langle g_1,...,g_n\rangle$ implies that $f = \sum c_i g_i$, for $c_i \in \Q$, and hence that there exists an LCL. As noted previously, such an LCL helps to compensate for the degeneracy in the system which arises when one rate law polynomial can be expressed in terms of the others. Of course, the general method of proof in Theorem \ref{finitethm} quickly becomes impractical for large sets of intermediates.

Another area of interest is the relationship between other properties of the CRN and its associated Galois groups. At the moment, the computation of the Galois groups is often the most time-consuming part of performing the algorithm in Proposition \ref{algprop}, and it is not even possible to do so on a computer beyond degree eight. A firmer understanding of the Galois groups which arise from CRNs (without specialization) may allow this computation to be avoided or accelerated. This suggests several questions:
\begin{enumerate}[leftmargin=2cm]
	\item Are there more properties of a CRN's Galois groups which can be related to the OSR or other graphs?
	\item In a fixed CRN, what relationships exist among the Galois groups corresponding to different species? What about extensions of the CRN?
	\item Is it possible to determine some Galois groups directly from the structure of graphs associated to the CRN? (The OSR graph seems a good candidate, but it could be some other graph).
	\item For a group $G$, does there exist a CRN and choice of intermediates such that $G$ is a Galois group associated to one of the intermediates?
\end{enumerate}
The first and second questions seem the most approachable. Furthermore, progress on these two may allow some information to be obtained about the Galois groups, even the computation cannot be performed. This is exactly what Corollary \ref{osrcor} provides: as long as the set of intermediates is extended in a tree-like way, the solvability is preserved for the Galois groups of the positive real quasi-state-states.

The third question is likely very difficult to answer, but it would be extremely useful. At the moment, the computation of the Galois groups is the slowest step in the algorithm outlined earlier. Further, we know of no software which can compute Galois groups over fields of rational functions in $\Q$ for polynomials of degree greater than $8$. This bound is hit very quickly, even for few intermediates and at-most-bimolecular kinetics. It may be possible that there is enough extra structure in CRNs to enable progress on Question 3.

The fourth question is perhaps the most mathematically interesting of the four. While of somewhat less practical use, its answer would likely necessitate progress on the other three. Even without restricting to at-most-bimolecular kinetics, we suspect that not all groups are realizable by QSSA. For instance, we know of no CRNs in which a nontrivial Galois group of odd order arises, and conjecture that there are none.

\section*{Acknowledgements}
I would like to thank Dr. Anne Shiu for her mentorship and many valuable insights. I am also grateful to Ola Sobieska for helpful conversations. This research was conducted as part of Texas A\&M University's Mathematics REU on Algebraic Methods in Computational Biology, funded by the National Science Foundation through REU grant DMS-1460766.

\bibliography{refs}

\begin{thebibliography}{10}

\bibitem{atiyah}
M.~Atiyah and I.~MacDonald.
\newblock {\em Introduction To Commutative Algebra}.
\newblock Addison-Wesley series in mathematics. Addison-Wesley-Longman, 1969.

\bibitem{becker}
T.~Becker, V.~Weispfenning, and H.~Kredel.
\newblock {\em Gr{\"o}bner bases: a Computational Approach to Commutative
  Algebra}.
\newblock Graduate Texts in Mathematics. Springer-Verlag, 1993.

\bibitem{iva}
D.~A. Cox, J.~Little, and D.~O'Shea.
\newblock {\em Ideals, Varieties, and Algorithms: An Introduction to
  Computational Algebraic Geometry and Commutative Algebra, 3/e (Undergraduate
  Texts in Mathematics)}.
\newblock Springer-Verlag New York, Inc., Secaucus, NJ, USA, 2007.

\bibitem{osr}
G.~Craciun and M.~Feinberg.
\newblock Multiple equilibria in complex chemical reaction networks. {II}.
  {T}he species-reaction graph.
\newblock {\em SIAM J. Appl. Math.}, 66(4):1321--1338, 2006.

\bibitem{feinberg}
M.~Feinberg.
\newblock {Lectures on chemical reaction networks}.
\newblock Notes of lectures given at the Mathematics Research Center of the
  University of Wisconsin in 1979, available at
  \url{http://www.che.eng.ohio-state.edu/~feinberg/LecturesOnReactionNetworks},
  1979.

\bibitem{fogler}
S.~H. Fogler.
\newblock {\em Elements of Chemical Reaction Engineering}.
\newblock Elements of Chemical Reaction Engineering. Prentice Hall PTR, 5th
  edition, 2016.

\bibitem{hashemi}
A.~Hashemi and D.~Lazard.
\newblock {Complexity of Zero-dimensional Gr{\"o}bner Bases}.
\newblock Research Report RR-5660, 2005.

\bibitem{antlang}
S.~Lang.
\newblock {\em Algebraic Number Theory}.
\newblock Graduate Texts in Mathematics. Springer, 2nd edition, 1986.

\bibitem{galoisalg}
T.~Mattman and J.~McKay.
\newblock Computation of {G}alois groups over function fields.
\newblock {\em Mathematics of Computation}, 66:823--831, 1997.

\bibitem{pantea}
C.~Pantea, A.~Gupta, J.~B. Rawlings, and G.~Craciun.
\newblock The {QSSA} in chemical kinetics: As taught and as practiced.
\newblock In N.~Jonoska and M.~Saito, editors, {\em Discrete and Topological
  Models in Molecular Biology}. Springer, 2014.

\end{thebibliography}
\end{document}